\newtheorem{definition}{Definition}[section]
\newtheorem{lemma}{Lemma}[section]
\newtheorem{proposition}{Proposition}[section]
\newtheorem{corollary}{Corollary}[section]
\newtheorem{theorem}{Theorem}[section]
\newtheorem{convention}{Convention}[section]
\newtheorem{remark}{Remark}[section]
\renewcommand{\P}{\mathcal{P}}
\newcommand{\Pu}{u\mathcal{P}}
\newcommand{\C}{\mathcal{C}}
\newcommand{\Cu}{u\mathcal{C}}
\newcommand{\E}{\mathcal{E}}
\newcommand{\Q}{\mathbb{Q}}
\newcommand{\R}{\mathbb{R}}
\newcommand{\RR}{\mathcal{R}}
\renewcommand{\1}{\mathbbm{1}}
\newcommand{\Ass}{\mathcal{ASS}}
\newcommand{\Com}{\mathcal{COM}}
\newcommand{\BE}{\mathcal{BE}}
\renewcommand{\S}{\mathbf{S}}
\newcommand{\Cocom}{\mathcal{COCOM}}
\DeclareMathOperator{\Ind}{Ind}
\DeclareMathOperator{\End}{End}
\newcommand{\N}{\mathbb{N}}
\DeclareMathOperator{\id}{id}
\DeclareMathOperator{\MC}{MC}
\title{
A Lie theoretic approach to the twisting procedure and Maurer-Cartan simplicial sets over arbitrary rings}
\author[1]{Niek de Kleijn}
\author[2]{Felix Wierstra}
\date{}
\affil[1]{Delft Institute of Applied Mathematics, Technical University Delft, Mekelweg 4, Delft, The Netherlands, n.dekleijn@tudelft.nl}
\affil[2]{Korteweg-de Vries Institute for Mathematics, University of Amsterdam, Science Park 105-107, Amsterdam, The Netherlands, felix.wierstra@gmail.com}
\begin{document}
\maketitle

\begin{abstract}
The Deligne-Getzler-Hinich--$\infty$-groupoid or Maurer-Cartan simplicial set of an $L_\infty$-alge\-bra plays an important role in deformation theory and many other areas of mathematics. Unfortunately, this construction only works over a field of characteristic $0$.  The goal of this paper is to show that the notions of Maurer-Cartan equation and Maurer-Cartan simplicial set can be defined for a much larger number of operads than just the $L_\infty$-operad. More precisely, we show that the Koszul dual of every unital Hopf cooperad (a cooperad in the category of unital associative algebras) with an arity $0$ operation admits a twisting procedure, a natural notion of Maurer-Cartan equation and under some mild additional assumptions can also be integrated to a Maurer-Cartan simplicial set. In particular, we show that the Koszul dual of the Barratt-Eccles operad and its $\E_n$-suboperads admit Maurer-Cartan simplicial sets. In this paper, we will work over arbitrary rings.
\end{abstract}

\section{Introduction}

Lie algebras up to homotopy, better known as $L_\infty$-algebras, play an important role in many areas of mathematics like deformation quantization and deformation theory, mathematical physics, symplectic geometry, rational homotopy theory and many others.   It is a well known philosophy that, over a field of characteristic $0$, all deformation problems are controlled by the Maurer-Cartan elements in an $L_\infty$-algebra. This philosophy goes back to Deligne, Drinfeld, Feigin, Hinich, Kontsevich-Soibelman, Manetti, and many others, and was made precise by Lurie in \cite{Lurie2010} and Pridham in \cite{Pridham}. The information about such a deformation problem can be conveniently organized in a simplicial set, which is known as the Deligne-Getzler-Hinich--$\infty$-groupoid, Maurer-Cartan simplicial set or nerve. This simplicial set was first defined by Getzler in the case of $L_\infty$-algebra in \cite{Getzler}. 

When working over fields of general characteristic, the situation is a lot more complicated. Deformation problems or formal moduli problems are no longer controlled by $L_\infty$-algebras but by $E_n$-algebras. More explicitly, Lurie showed that for $n< \infty$, there is an equivalence between the  $\infty$-category of of formal $E_n$-moduli problems and the $\infty$-category of augmented $E_n$-algebras. This was later extended to $n=\infty$ by Brantner and Mathew \cite{BrantnerMathew} and Brantner, Campos and Nuiten \cite{BrantnerCamposNuiten}.

Although all these approaches are great from a theoretical perspective, there has not been an analog of the Deligne-Hinich-Getzler--$\infty$-groupoid. With this we mean a simplicial set which encodes the homotopical information of the deformation problem, in which the simplices (Maurer-Cartan elements) are defined by explicit equations and operations like the twist are defined by explicit formulas. In \cite{deKleijnWierstra}, we made a first step towards this by showing that $A_\infty$-algebras (also known as $E_1$-algebras) admit a Maurer-Cartan simplicial set. In this paper, we show that a much larger number of operads have a natural notion of Maurer-Cartan equation, twisting procedure and Maurer-Cartan simplicial set. 

\medskip

To construct our Maurer-Cartan simplicial set over a general ring $R$, we use Kontsevich's perspective of formal pointed manifolds from \cite{Kontsevich} and generalize it to what could be seen as "formal pointed Lie groups". Over a field of characteristic $0$, Kontsevich defined an $L_\infty$-algebra on a graded vector space $V$ as a degree $-1$ coderivation  $Q:C(V)\rightarrow C(V)$ which squares to zero, where $C(V)$ denotes the cofree conilpotent cocommutative coalgebra cogenerated by $V$. The coderivation $Q$ is seen as a "vector field" on $\C(V)$. In this paper, we generalize this idea by taking more general cooperads $\C$ instead of the cocommutative cooperad. If we furthermore assume that $\C$ is a unital Hopf cooperad (a cooperad in the category of unital associative algebras), which admits an arity zero operation encoding the counit, then we define an $\Omega \C$-algebra on a graded $R$-module $V$ as a square-zero coderivation $Q:\C(V)\rightarrow \C(V)$ on the cofree conilpotent $\C$-coalgebra cogenerated by $V$. Here, $\Omega \C$ denotes the operadic cobar construction on $\C$ and the correspondence between $\Omega \C$-coalgebras and coderivations follows from the Rosetta Stone Theorem (see \cite{LV}, Theorem 10.1.13). To ensure that certain infinite sums converge, we need to introduce filtrations and restrict ourselves to complete or pro-nilpotent $\Omega \C$-algebras. To ensure that the theory works over arbitrary rings, we further need to assume that in arity $r$, $\C(r)$ is free as an $R[\S_r]$-module, where $\S_r$ denotes the symmetric group on $r$ elements.

The class of operads that satisfy these assumptions is quite large. For example, it contains  the normalized cochains on any operad $\P$ in simplicial sets, such that $\P(0)=*$,  $\P$ has a free symmetric group action in each arity and finitely many non-degenerate simplices in each arity. This is because the cochains on a topological operad have the cochain level cup product making it a unital Hopf cooperad. Some particular examples of such operads are the commutative operad, the associative operad, the Barratt-Eccles operad and the $\E_n$-suboperads of the Barratt-Eccles operad.

By a theorem of Moerdijk \cite{Moerdijk}, we can equip $\C(V)$ with an associative product 
\[
\star:\C(V)\otimes \C(V)\rightarrow \C(V)
\]
which is a generalization of the shuffle product on the cofree conilpotent cocommutative coalgebra and is therefore called the generalized shuffle product. This product turns $\C(V)$ into  a $\C$-$\Ass$-bialgebra, which can be interpreted as the analog of a Lie group, where $\star$ corresponds to the group structure and the differential $Q:\C(V)\rightarrow \C(V)$ corresponds to the manifold structure. The chain complex $V$ corresponds in this case to the tangent space or "Lie algebra".

This "Lie group" has many similarities to the theory of Lie groups and we generalize certain concepts from classical Lie theory to this new setting.  We start by defining the analog of the exponential map, which is a map 
\[
\exp:V_0\rightarrow \C(V)
\]
from the "Lie algebra" $V$ to the "Lie group" $\C(V)$ which shares many properties with the ordinary exponential map. This exponential map allows us to define a twisting procedure which twists the differential $Q:\C(V)\rightarrow \C(V)$ by a degree $0$ element $v\in V_0$, in a way that is analogous to the adjoint representation of a classical Lie group. More precisely, if $x\in \C(V)$  then the twisted differential $Q^v:\C(V) \rightarrow \C(V)$ is defined as
\[
Q^v:=\exp(-v)\star Q(\exp(v)\star x).
\]
The Maurer-Cartan equation then naturally follows as a flatness condition for this twist. This further gives an alternative to the twisting procedure from \cite{DSV}, which works for differential graded operads and over arbitrary rings instead of fields of characteristic $0$. 


If the cooperad $\C$ comes equipped with a map $\E_\infty\rightarrow \C$, where $\E_\infty$ denotes the cochains on the Barratt-Eccles operad $\E_\infty$, we can integrate every $\Omega \C$-algebra to a simplicial set which we call the Maurer-Cartan simplicial set. The main examples which satisfy this requirement are the Barratt-Eccles cooperad and its $E_n$-subcooperads. We finish by showing that this simplicial set is Kan complex and therefore an $\infty$-groupoid.



\subsection*{Acknowledgements}

The authors would like to thank Daniel Robert-Nicoud, Martin Markl, Igor Khavkine, Dion Leijnse, Noah Olander, Sergey Shadrin and Jos\'e Moreno-Fern\'andez for useful conversations and comments. The second author is supported by Dutch Research Organisation (NWO) grant number VI.Veni.202.046.

\section{Preliminaries and conventions}

In this section, we recall the necessary preliminaries and establish our notation and conventions.

\subsection{Conventions and notation}

In this paper, we work  in the category of chain complexes over a general ring $R$ with unit. We use a homological grading, so the differentials are of degree $-1$. Unless stated otherwise, all tensor products are taken over $R$ and we implicitly assume the Koszul sign rule, i.e. if $V$ and $W$ are two chain complexes then the switch map $V\otimes W \rightarrow W \otimes V$ induces an additional sign. The chain complex of $R$-linear maps between two chain complexes $V$ and $W$ is denoted by $\hom_R(V,W)$, often we drop the $R$ and write just $\hom(V,W)$. 

Let $G$  be a finite group and $V$  a chain complex with a linear $G$-action, then we denote the coinvariants with respect to the $G$ action by $V_G$. The invariants with respect to the $G$-action are denoted by $V^G$. The symmetric group in $r$ elements is denoted by $\S_r$, by convention we say that both $\S_1$ and $\S_0$ are the trivial group.

\subsection{Conventions on operads and cooperads}

We assume that the reader is familiar with the basic theory of algebraic operads and cooperads and otherwise refer the reader to \cite{Fresse00}, \cite{Fresse04} and \cite{LV}. Although the book \cite{LV} is written in characteristic $0$, most of the results hold over arbitrary rings with only small modifications which are explained later in this paper. 

Unless stated otherwise, all operads are in chain complexes over a ring $R$ (except for Section \ref{sec:examples} where we also consider operads in simplicial sets). Since we work over a general ring $R$, and not a field of characteristic $0$, there are a few special conventions we need in this paper that are not super standard and we recall those here. For more details, we mainly refer to \cite{Fresse04}, which is one of the few references that works in this generality.

The unit operad and the unit cooperad are both denoted by $I$ and defined as $I(r)=0$ if $r\neq1$ and  $I(1)=R$, with the appropriate (co)operad structure. From the context it should be clear whether we mean the operad or cooperad. An operad $\P$ is called augmented if there is an additional map $\epsilon:\P\rightarrow I$ which is a splitting for the operadic unit. A cooperad is coaugmented if there is a splitting $\eta:I\rightarrow \C$ for the cooperadic counit.

An operad $\P$ (resp. cooperad $\C$) is called unitary  if $\P(0)=R$ (resp. $\C(0)=R$), the unitary operation will be denoted by $\1$. We denote unitary operads by $\Pu$ (resp. unitary cooperads by $\Cu$) to indicate that they are unitary. Every unitary operad (resp. cooperad) has a non-unitary suboperad (resp. non-unitary  quotient cooperad) which is  denoted by $\P$ (resp. $\C$) and is defined as $\P(0):=0$ and $\P(r):=\Pu(r)$ for $r\geq 1$ (resp. $\C(0):=0$ and $\C(r):=\Cu(r)$ for $r\geq 1$, the coproduct is called the reduced coproduct). A unitary operad $\P$ (resp. unitary cooperad $\C$) is called reduced if $\P(1)=R$ is the operadic unit (resp. $\C(1)=R$ is the cooperadic counit). A non-unitary operad $\P$ (resp. cooperad $\C$) is called reduced if $\P(0)=\P(1)=0$ (resp. $\C(0)=\C(1)=0$). Note that every reduced operad (resp. cooperad) is canonically augmented (resp. coaugmented). The non-unitary part of every reduced unitary operad has a natural weight grading which coincides with the arity grading.

We further assume for simplicity that all operads and cooperads we consider are of finite $R$-type, i.e. they are finitely generated $R$-modules in each degree in each arity. All operads and cooperads are assumed to be projective as $R$-modules in each arity.  Finally, we also assume that the non-unitary part of all cooperads is conilpotent. With these assumptions we can freely dualize between operads and cooperads. Note that the dual of a unitary operad is usually not conilpotent.



\subsection{(Co)Algebras over (co)operads}

In this section, we recall the details about (co)algebras over (co)operads and their up to homotopy  versions.

\subsubsection{Invariants vs coinvariants}

Since we are working over general rings instead of a field of characteristic $0$, there are a few subtleties in the definition of algebras over symmetric operads. The main issue is that when working over an arbitrary ring, there are two ways to consider the symmetric group actions, we can either take invariants or coinvariants with respect to the symmetric group actions. As is explained in \cite{Fresse00}, over a field  characteristic $0$ these choices are isomorphic, but over an arbitrary ring these choices can be very different. In this section, we recall the necessary definitions and fix our conventions about algebras and coalgebras over operads and cooperads, for more details see \cite{Fresse00}.

Given two symmetric sequences $M$ and $N$, the tensor product of $M$ and $N$ is defined as
\[
\left( M \otimes N \right) (r)= \bigoplus_{i+j=r} \Ind^{\S_r}_{\S_i \times \S_j} M(i)\otimes N(j),
\]
where $\Ind$ denotes the induced representation. Besides the tensor product, there are also two version of the composition product. The first version, which is just called the composition product, is defined as
\[
M \circ N = \bigoplus_{r \geq 0} \left( M(r) \otimes N^{\otimes r} \right)_{\S_r}. 
\]
The composition product with divided symmetries is defined as
\[
M \tilde{\circ} N = \bigoplus_{r \geq 0} \left( M(r) \otimes N^{\otimes r} \right)^{\S_r}, 
\]
so instead of using coinvariants we are using invariants with respect to the symmetric groups $\S_r$.

Using these two versions of the composition product we can define algebras  (resp. coalgebras) over operads (resp. cooperads), again we have two different versions here, one using invariants and one using coinvariants. 

Let $V$ be a chain complex and $\mathcal{P}$ an operad. Then we call $V$ a $\P$-algebra if there exists a map 
\[
\gamma_V:\P\circ V \rightarrow V
\]
satisfying the usual axioms. The chain complex $V$ is called a $\P$-algebra with divided symmetries if there exists a map 
\[
\tilde{\gamma_V}:\P \tilde{\circ} V \rightarrow V,
\]
again satisfying the usual axioms.

Let $\C$ be a (non-unitary) reduced operad, the cofree conilpotent $\C$-coalgebra on a chain complex $V$, denoted $\C(V)$, is  defined as 
\[
\C(V):=\bigoplus_{r \geq 1} \left(\C(r) \otimes V^{\otimes r} \right)_{\S_r}.
\]
Note that the direct sum starts at $1$ because we assumed that $\C$ is reduced. The coproduct 
is induced by the cooperad structure of $\C$ (see \cite{LV} for more details).

Let $M$ and $N$ be two symmetric sequences, assume that $M(r)$ is free as an $R[\S_r]$-module in each arity $r\geq 0$, then we there is an isomorphism between the composition product and the composition product with divided powers. This isomorphism is induced by the norm map. If $X$ is an chain complex with an $\S_r$ action then there is a natural map, called the norm map, from the coinvariants to the invariants given by
\[
Tr:X_{\S_r} \rightarrow X^{\S_r}
\]
\[
Tr(x)=\sum_{\sigma \in \S_r} \sigma x,
\]
with $x \in X_{\S_r}$. The norm map is in general not an isomorphism, but if $X$ is free as an $R[\S_r]$-module then the norm map is an isomorphism. When $\Q \subseteq R$ there is a variation of the norm map given by $Tr(x)=\sum_{\sigma \in\S_r} \frac{\sigma x}{r!}$, which is also an isomorphism. Since we assumed that $M$ is free as an $R[\S_r]$-module, the norm map induces an isomorphism
\begin{equation}\label{eq:Normmap}
    Tr_{M,N}:M\circ N \rightarrow M\tilde{\circ} N.
\end{equation}
So in case that $\P$ (resp. $\C)$ is an operad (resp. cooperad) which  free as an $R[\S_r]$-module in each arity, there is no difference between algebras (resp. coalgebras) and algebras with divided symmetries (resp. coalgebras with divided symmetries).



\subsubsection{Homotopy algebras and the bar and cobar construction}

In this section, we discuss algebras up to homotopy and set up the situation in which things can be twisted.

Let $\P$ be a reduced operad (without unitary operation), if $\Q \nsubseteq R$, we further assume that the action of $\S_r$ is free in each arity $r$.  Let $\C$ be a reduced cooperad (also non-unitary) together with a Koszul twisting morphism $\tau:\C \rightarrow \P$. Then we define a $\P_\infty$-algebra as an algebra over $\Omega \C$, the operadic cobar construction of $\C$.

\begin{remark}
This definition varies a little bit from the definition of a $\P_\infty$-algebra from \cite{LV}, in their definition the operad $\P$ would be assumed to be quadratic and Koszul and $\C$ would be the quadratic dual. For preciseness, the cooperad $\C$ could have been included in the notation of a $\P_\infty$-algebra but in this paper it will always be clear which cooperad $\C$ we refer to.
\end{remark}

Alternatively, we can also define a $\P_\infty$-algebra on agraded $R$-module $V$ as a square-zero coderivation on the cofree conilpotent $\C$-coalgebra with divided symmetries $\C(V)$. This follows from the Rosetta Stone Theorem (Theorem 10.3.1 in \cite{LV}), because we assumed that all the symmetric group actions are free, this extends to arbitrary rings.

\begin{proposition}\label{prop:rosettastone}
Let $V$  be a chain complex  and $\tau:\C \rightarrow \P$ be a Koszul twisting morphism between a reduced cooperad $\C$ and a reduced operad $\P$, if $\Q \nsubseteq R$ further assume that $\C$ and $\P$ are free as $R[\S_r]$-modules in each arity. Then the notion of a $\P_\infty$-algebra, i.e. a morphism of operads $\gamma:\P_\infty=\Omega \C \rightarrow \End(V)$ is equivalent to a square-zero coderivation $Q:\C(V)\rightarrow \C(V)$ of degree $-1$, where $\C(V)$ is the cofree conilpotent $\C$-coalgebra.
\end{proposition}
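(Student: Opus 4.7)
The plan is to follow the standard proof of the Rosetta Stone Theorem in \cite{LV}, replacing each step whose validity relied on characteristic zero by its counterpart obtained from the norm map isomorphism \eqref{eq:Normmap}. The freeness hypothesis on $\C$ gives via \eqref{eq:Normmap} the identification
\[
\C(V) = \bigoplus_{r\geq 1} \bigl(\C(r)\otimes V^{\otimes r}\bigr)_{\S_r} \;\cong\; \bigoplus_{r\geq 1} \bigl(\C(r)\otimes V^{\otimes r}\bigr)^{\S_r},
\]
so the cofree conilpotent $\C$-coalgebra coincides with its divided-symmetries counterpart together with all the relevant structure (reduced coproduct, projection to cogenerators), and one is free to work in whichever of the two models is more convenient.

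Next, I would invoke the cofree universal property for coderivations in the divided-symmetries model: any degree $-1$ coderivation $Q:\C(V)\to\C(V)$ is uniquely determined by, and can be uniquely reconstructed from, its corestriction $\alpha = \pi\circ Q : \C(V)\to V$ to the cogenerators, with the full $Q$ written explicitly in terms of $\alpha$ and the reduced coproduct of $\C$. Using \eqref{eq:Normmap} once more in the opposite direction and the tensor-hom adjunction, such $R$-linear maps $\alpha$ correspond bijectively to degree $-1$ equivariant morphisms of symmetric sequences $\tau:\C\to\End(V)$, i.e.\ to degree $-1$ elements of the convolution dg Lie algebra $\Hom(\C,\End(V))$. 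A direct (but somewhat involved) computation using the reconstruction formula for $Q$ then shows that $Q^2=0$ if and only if $\tau$ satisfies the Maurer-Cartan equation in the convolution algebra, i.e.\ $\tau$ is a twisting morphism. By the defining adjunction of the operadic cobar construction, twisting morphisms $\C\to\End(V)$ are in natural bijection with morphisms of dg operads $\Omega\C\to\End(V)$, which is precisely a $\P_\infty$-algebra structure on $V$.

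The main obstacle is verifying that the norm map \eqref{eq:Normmap} genuinely intertwines the structures on both sides (reduced coproduct, derivation relation, convolution product), so that the equivalences obtained in the divided-symmetries model transport cleanly to the coinvariants model in which $\C(V)$ is stated. Once this bookkeeping is dealt with, the translation $Q^2=0 \Leftrightarrow$ Maurer-Cartan equation is a formal consequence of the universal property of $\Omega\C$ and is insensitive to the characteristic of $R$.
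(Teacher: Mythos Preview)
Your proposal is correct and follows essentially the same approach as the paper. In fact, the paper does not give a proof at all: it simply asserts that the result follows from the Rosetta Stone Theorem (Theorem 10.1.13 in \cite{LV}) together with the observation that the freeness hypothesis on the symmetric group actions allows the argument to go through over arbitrary rings. Your outline fills in precisely how this works, by making explicit the role of the norm isomorphism \eqref{eq:Normmap} in identifying the invariant and coinvariant models of $\C(V)$ and then running the standard chain of bijections (coderivations $\leftrightarrow$ corestrictions $\leftrightarrow$ twisting morphisms $\leftrightarrow$ operad maps out of $\Omega\C$). The ``main obstacle'' you flag, namely checking that the norm map transports the relevant structures, is exactly the content hidden in the paper's one-line justification.
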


Since $\C(V)$ is cofree, it further turns out that the coderivation $Q$ is determined by its image on cogenerators.

\begin{lemma}\label{lem:coderivation}
Under the assumptions of Proposition \ref{prop:rosettastone}, the coderivation $Q:\C(V)\rightarrow \C(V)$ is determined by $\tilde{Q}:\C(V)\rightarrow V$, the projection onto the cogenerators of $\C(V)$.
\end{lemma}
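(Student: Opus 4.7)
The plan is to adapt the standard argument that a coderivation on a cofree coalgebra is determined by its composition with the projection to cogenerators. The analogous statement for associative coalgebras is classical, and the cooperadic generalization (cf.\ Loday--Vallette \cite{LV}, Proposition 6.3.17) carries through here because our freeness hypothesis on the $\S_r$-action lets us identify the composition product $\C \circ V$ with the divided-symmetry version $\C \tilde\circ V$ via the norm isomorphism (\ref{eq:Normmap}), so that the universal property of $\C(V)$ as cofree conilpotent $\C$-coalgebra is available in the standard form.

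Concretely, given $\tilde{Q}: \C(V)\to V$, I would build the corresponding coderivation $Q$ by the explicit formula
\[
Q := (\id_\C \circ_{(1)} \tilde{Q}) \circ \Delta_{\C(V)},
\]
where $\Delta_{\C(V)}: \C(V) \to (\C \tilde\circ \C)(V)$ is the structure map of $\C(V)$ as a $\C$-coalgebra and $\circ_{(1)}$ denotes the infinitesimal composition that applies $\tilde{Q}$ to exactly one of the inner $\C$-factors and the identity to the remaining ones. On a typical element $(c; v_1,\dots,v_r)$ with $c\in\C(r)$, using the cooperadic coproduct $\Delta c = \sum c^{(0)}\otimes(c^{(1)}_1,\dots,c^{(1)}_k)$, this reads schematically
\[
Q(c; v_1,\dots,v_r)
=\sum \pm \bigl(c^{(0)};\, x_1,\dots, \tilde{Q}(c^{(1)}_i; v_{j_1},\dots,v_{j_{r_i}}),\dots, x_k\bigr),
\]
where the remaining $x_\ell$ are the unchanged subwords $(c^{(1)}_\ell; v_{j_1},\dots)$ and the signs come from the Koszul rule.

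For existence, I would check directly that this $Q$ is a coderivation for the $\C$-coalgebra structure, which amounts to a diagram chase using coassociativity of $\Delta$, and that $\pi\circ Q = \tilde{Q}$, which is immediate from the cooperadic counit axiom (the only term surviving projection to cogenerators is the one in which the outer $c^{(0)}$ lies in $\C(1)=R\cdot\id$). For uniqueness, suppose $Q'$ is another coderivation satisfying $\pi\circ Q' = \tilde{Q}$ and set $D:=Q-Q'$; then $D$ is a coderivation with $\pi\circ D = 0$. Unwinding the coderivation equation $\Delta_{\C(V)}\circ D = (\id\circ_{(1)} D)\circ\Delta_{\C(V)}$ and iterating, one sees that $D$ must send the arity-$r$ piece of $\C(V)$ into pieces of arity strictly greater than $r$; since $\C$ is conilpotent and of finite type, this forces $D=0$.

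The main obstacle is bookkeeping rather than conceptual: writing the coderivation condition correctly in the cooperadic setting requires careful handling of how the inputs $v_1,\dots,v_r$ are distributed across the factors produced by $\Delta c$, together with the Koszul signs. The freeness assumption on the $\S_r$-actions is exactly what is needed to make the invariants/coinvariants switch transparent, so that the argument reduces to the standard cooperadic universal property.
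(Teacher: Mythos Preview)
Your proposal is correct and follows exactly the standard Loday--Vallette argument that the paper itself invokes: the paper does not give a proof but simply states that it is ``completely analogous to the proof of Proposition~6.3.8 of \cite{LV} and is therefore omitted.'' Your explicit reconstruction formula $Q=(\id_\C\circ_{(1)}\tilde Q)\circ\Delta_{\C(V)}$ and the use of the freeness hypothesis to pass between invariants and coinvariants are precisely the points one would need to spell out, so you have in effect supplied what the paper leaves implicit. (Minor note: your uniqueness sketch --- that $D$ with $\pi D=0$ raises arity --- is a bit loose; the clean way is to observe that the reconstruction formula applied to any coderivation $Q$ returns $Q$ itself, which one checks directly from the coderivation identity and conilpotence, giving uniqueness without a separate filtration argument.)
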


The proof is completely analogous to the proof of Proposition 6.3.8 of \cite{LV} and is therefore omitted. The converse of the lemma is not true, not every map $f:\C(V)\rightarrow V$ determines a square-zero coderivation. Every degree $-1$ map $f:\C(V)\rightarrow V$ does determine a coderivation but this might not square to zero. 

The map $\tilde{Q}:\C(V)\rightarrow V$ can be decomposed to define the set of multiplications on $V$. We denote the arity $r$-component by $\tilde{Q}_r:\C(r)\otimes V^{\otimes r} \rightarrow V$. Every element $\delta \in \C(r)$ defines a multiplication $Q_\delta:V^{\otimes r}\rightarrow V$ by first taking the inclusion  $\C(r)\otimes V^{\otimes r} \hookrightarrow \bigoplus_{r \geq 1} \left( \C(r)\otimes V^{\otimes r} \right)_{\S_r}$ and then the projection onto $V$.

\subsubsection{Filtrations and completions}

In the rest of this paper, we need to consider certain infinite sums, we therefore need to include certain filtrations. We mainly use the conventions from \cite{deKleijnWierstra} (except that in this paper we use a homological grading instead of the cohomological one in \cite{deKleijnWierstra}).

Let $V$  be a chain complex then we will from now on assume that $V$ is equipped with a descending filtration 
\[
V=F^1 V\supset F^2 V\supset F^3 V \supset ...,
\]
whichis respected by the differential and  satisfies $\cap_{k}F^kV=\{0\}$. We further assume that the filtration is complete, i.e. $V =\varprojlim F^i V$. 

The tensor product of two chain complexes is defined by first taking the tensor product of the filtrations and then completing with respect to this induced filtration (for more details see \cite{Fresse17} Section 7.3). We further need that if $W$ is a finitely generated chain complex and $V$ a filtered chain complex then $\hom(W,V)$ is also filtered with filtration
\[
F^n\hom(W,V):=\hom(W,F^n V).
\]
If the filtration on $V$ is complete then so is the filtration on $\hom(W,V)$. Similary, we can equip the tensor product of a filtered chain complex $V$ and an unfiltered finitely generated chain complex $W$ with a filtration given by 
\[
F^n(W \otimes V):=W \otimes F^n V.
\]
Again if the filtration on $V$ is complete then so is the filtration on $W \otimes V$.

Using this completed tensor product, the $r$-fold tensor product of a dg-$R$-module $V$ with itself also becomes a complete $R$-module. For a cooperad $\C$, we can extend this induced filtration to $\C(V)$.

An  $\Omega \C$-algebra is called complete or pro-nilpotent if the square-zero coderivation  $\tilde{Q}:\Cu(V)\rightarrow V$ respects the filtrations, i.e. if 
\[
\tilde{Q}_r\left(\C(r) \otimes F^{i_1}V \otimes ... \otimes F^{i_r} V\right) \subseteq F^{i_1+...+i_r}V.
\]

\begin{convention}
From now on we will always assume that the objects we are working with have a filtration and are complete with respect to this filtration.
\end{convention}

\subsubsection{Curved $\P_\infty$-algebras and $\infty$-morphisms}

If we assume that $\Cu$ is reduced unitary operad and that all our chain complexes are equipped with complete filtrations, then we can generalize $\P_\infty$-algebras to curved $\P_\infty$-algebras.  Curved algebras where first introduced by Positselski in \cite{Positselski}, where he showed the remarkable fact that there is a Koszul duality between unital algebras and curved coalgebras. 

\begin{definition}
    Let $\Cu$ be a unitary reduced cooperad with non-unitary part $\C$ and $V$  a filtered graded $R$-module. Since $\Cu$ is reduced, its non-unitay part $\C$ automatically has a weight grading. We denote by $\widehat{\Cu(V)}$ the completion of $\bigoplus_{r\geq 0}\left( \Cu(r) \otimes V^{\otimes r} \right)_{\S_r}$ with respect to this weight grading, where we put $\1\in \Cu(0)$ in weight $0$. We call this the completed conilpotent  cofree $\C$-coalgebra. 
\end{definition}

By using the cooperadic decomposition map $\Cu(V)$ becomes a $\Cu$-coalgebra.

\begin{definition}
Let $\Cu$ be a unitary cooperad and $\C$ its non-unitary part. A curved $\Omega\C$-algebra on a graded $R$-module $V$ is defined as a square-zero coderivation $Q:\Cu(V)\rightarrow \Cu(V)$. Similar to Lemma \ref{lem:coderivation}, every square-zero coderivation is determined by its image on the cogenerators $\tilde{Q}:\Cu(V)\rightarrow V$.

The curvature $\RR$ of a curved $\Omega\C$-algebra $(V,\tilde{Q}:\Cu(V)\rightarrow V)$ is defined as $\RR:=\tilde{Q}(\1)\in V$, where $\1$ is the unitary operation of $\Cu$. We call a curved $\Omega \C$-algebra flat if $\RR=\tilde{Q}(\1)=0$. 
\end{definition}

\begin{remark}
For simplicity, we have chosen to only use the definition of curved coalgebras as coderivations on the cofree conilpotent unitary $\Cu$-coalgebra.  There is also a theory of curved coalgebras using curved operads, it seems plausible that most of the theory would also work in that description but that is beyond the scope of this paper. For more details, see for example \cite{DCM}.
\end{remark}

For the definition of the exponential map we further need the notion of tangent vector of an element 

\begin{definition}\label{def:tangentvector}
    Let $\tilde{Q}:\Cu(V)\rightarrow V$ be a curved complete $\Omega \C$-algebra, then the cooperadic counit map $\epsilon:\Cu\rightarrow I$ induces a map $T:\Cu(V)\rightarrow I(V) \cong V$, where $I(V)$ is the cofree coalgebra generated by the cooperad $I$. We call this the tangent space of $\Cu(V)$. If $v\in \Cu(V)$ then we call $T(v)$ the tangent vector of $v$.
\end{definition}

\begin{lemma}
    Let $V$ be a flat curved $\Omega \C$-algebra, then $V$ is an algebra over the operad $\Omega \C$.
\end{lemma}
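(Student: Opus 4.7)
The plan is to produce, from the flat curved coderivation $Q:\Cu(V)\to\Cu(V)$, a square-zero coderivation $Q':\C(V)\to \C(V)$ on the non-unitary cofree $\C$-coalgebra, since by Proposition \ref{prop:rosettastone} this is exactly the data of an $\Omega\C$-algebra structure on $V$. The candidate $Q'$ will come from Lemma \ref{lem:coderivation}: it suffices to specify a map $\tilde{Q}':\C(V)\to V$ and then take $Q'$ to be its unique coderivation extension.

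The construction of $\tilde{Q}'$ is immediate from the hypothesis. The flatness condition $\tilde{Q}(\1)=0$ says that the arity-$0$ component $\tilde{Q}_0:\Cu(0)\to V$ of $\tilde{Q}$ vanishes, so $\tilde{Q}$ factors as $\tilde{Q}=\tilde{Q}'\circ\pi$, where $\pi:\Cu(V)\twoheadrightarrow\C(V)$ is the projection that kills $R\cdot\1$; equivalently, the restriction $\tilde{Q}':=\tilde{Q}|_{\C(V)}:\C(V)\to V$ along the summand inclusion $\C(V)\hookrightarrow\Cu(V)$ is well-defined. I then take $Q'$ to be the coderivation on $\C(V)$ that it determines.

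The main content is to verify that $(Q')^2=0$, and the route is to show that $Q$ preserves $\C(V)\subset\Cu(V)$ and that the restriction $Q|_{\C(V)}$ coincides with $Q'$. I expect this identification to be the main technical step. Unpacking the standard formula that extends $\tilde{Q}$ to a coderivation on a cofree cooperadic coalgebra, for $c(v_1,\dots,v_n)\in\C(V)$ with $n\geq 1$, every summand of $Q(c(v_1,\dots,v_n))$ corresponds to an infinitesimal decomposition of $c$ in $\Cu$ of the form $c=c_1\circ_i c_2$ with one ``branch'' $c_2\in\Cu(l)$ feeding into $\tilde{Q}$. When $l\geq 1$ the branch lies in $\C$, the resulting term has arity $n+1-l\geq 1$ and hence lives in $\C(V)$, and it matches term-for-term a summand in the analogous formula computing $Q'(c(v_1,\dots,v_n))$ from the reduced coproduct on $\C$ applied to $\tilde{Q}'$. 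When $l=0$ the branch is $\1\in\Cu(0)$, so the whole summand carries the factor $\tilde{Q}(\1)=\RR=0$ and vanishes. This simultaneously gives $Q(\C(V))\subseteq \C(V)$ and $Q|_{\C(V)}=Q'$.

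Granted this, squaring is immediate: for $v\in\C(V)$ I obtain $(Q')^2(v)=Q(Q(v))=Q^2(v)=0$, since $Q^2=0$ on all of $\Cu(V)$ by hypothesis. A final appeal to Proposition \ref{prop:rosettastone} then promotes $Q'$ to an operad morphism $\Omega\C\to\End(V)$, endowing $V$ with the desired $\Omega\C$-algebra structure. The hard part of the argument, as indicated, is the term-by-term matching of $Q|_{\C(V)}$ with $Q'$; flatness enters precisely to kill the curvature contributions coming from those cooperadic decompositions whose branch into $\tilde{Q}$ is the counit $\1\in\Cu(0)$.
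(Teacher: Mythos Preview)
Your proposal is correct and follows essentially the same approach as the paper: restrict $\tilde{Q}$ to $\C(V)$, extend to a coderivation $Q'$, argue that $Q'$ squares to zero because it agrees with the restriction of $Q$, and invoke the Rosetta Stone. The paper's proof is much terser---it simply asserts that the restricted coderivation squares to zero ``because $\tilde{Q}$ squared to zero and $\RR=0$''---whereas you spell out the term-by-term identification $Q|_{\C(V)}=Q'$ via the infinitesimal decomposition, making explicit how flatness kills exactly the arity-$0$ branch contributions; this is a welcome clarification rather than a different method.
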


\begin{proof}
Since $V$ is a flat curved $\Omega \C$-coalgebra it is defined by a coderivation $\tilde{Q}:\Cu(V)\rightarrow V$ with $\tilde{Q}(\1)=0$. We can therefore restrict the coderivation $\tilde{Q}$ to a coderivation $\tilde{Q}':\C(V)\rightarrow V$, this coderivation also squares to zero because $\tilde{Q}$ squared to zero and $R=0$. It then follows from the Rosetta Stone (Proposition \ref{prop:rosettastone}) that this is an $\Omega \C$-algebra.
\end{proof}

\begin{definition}
A curved $\Omega \C$-algebra is called nilpotent if there exists an $N \in \N$ such that the arity $r$-part of the coderivation $\tilde{Q}:\Cu(V)\rightarrow V$ is zero for all elements of arity $r >N$.
\end{definition}

Curved $\Omega \C$-algebras have two types of morphisms, strict morphisms and $\infty$-morphisms. Let $A$ and $B$ be two curved $\Omega \C$-algebras together with coderivations $\tilde{Q}^A:\Cu(A)\rightarrow A$ and $\tilde{Q}^B:\Cu(B)\rightarrow B$, then a strict morphism is a linear map $f:A \rightarrow B$ that strictly commutes with all structure maps and filtrations, i.e. if $a_1,...,a_r \in A$ and $\delta\in\Cu(r)$ then $f\tilde{Q}^A_r\left(\delta \otimes a_1 \otimes ... \otimes a_r\right)=Q^B_r\left(\delta\otimes f(a_1)\otimes... \otimes f(a_r)\right) $. The more interesting notion of a morphism is called an $\infty$-morphism and is defined as follows. An $\infty$-morphism $\Phi:A \rightsquigarrow B$ is a $\Cu$-coalgebra map 
\[
\Phi:\Cu(A)\rightarrow \Cu(B),
\]
such that $\Phi \circ Q^A=Q^b \circ \Phi$. Since $\Cu(B)$ is cofree, this is equivalent to a sequence of maps 
\[
\phi_r:\Cu(r)\otimes A^r \rightarrow B,
\]
satisfying a certain equation coming from the cooperadic decompositon map. We call the map $\phi_r$ the arity $r$ component of $\Phi$. Note that an $\infty$-morphism for which $\phi_r=0$ for all $r\neq 1$ is the same as a strict morphism.

\begin{remark}
    There is a second notion of an $\infty$-morphism, which is given by replacing $\Cu(A)$ and $\Cu(B)$ by their completions $\widehat{\Cu(A)}$ and $\widehat{\Cu(B)}$. These notions can be very different, for example in the non completed case the only grouplike element in $\Cu(A)$ (resp. $\Cu(B)$) is $\1_A$ (resp. $\1_B$) so $\Phi(\1_A)=\1_B$. In the completed case, this is no longer the case and the homotopy theory of these algebras behaves very differently (see for example \cite{GuanLazarev} in the case of modules). Since the non-completed notion of $\infty$-morphism coincides with the classical notion of $\infty$-morphism we have chosen to only focus on this notion.
\end{remark}

\section{The (generalized) shuffle product}

In this section, we recall a generalization of the shuffle product, but before we can do that we first recall the notion of a Hopf (co)operad. For more details see for example \cite{Moerdijk} or \cite{Loday}.

\subsection{Hopf (co)operads and the tensor product of (co)algebras}

Let $\P$ be an operad and $A$ and $B$ two $\P$-algebras, in general the tensor product $A \otimes B$ does  not have the structure of a $\P$-algebra but only the structure of a $\P\otimes \P$-algebra, where $\P \otimes \P$ denotes the aritywise tensor product of the operad $\P$ with itself. There is a special class of operads called Hopf operads for which the tensor product has a natural $\P$-algebra structure. 

A Hopf operad $\P$ is an operad in the category of coassociative coalgebras, i.e. in each arity $r$ we have a coproduct $\Delta_r:\P(r)\rightarrow \P(r) \otimes \P(r)$ such that the operadic composition maps commute with the coproducts. A Hopf operad is called counital if the coproducts $\Delta^r$ are all counital.

Suppose that $A$ and $B$ are algebras over a Hopf operad $\P$ then we can equip the tensor product $A\otimes B$ with a $\P$-algebra structure where the multiplication map
\[
\P(r)\otimes \left(A \otimes B\right)^{\otimes r} \rightarrow A \otimes B
\]
is given by the following map
\[
\P(r)\otimes \left( A \otimes B \right)^{ \otimes r} \xrightarrow{\Delta_r \otimes \id_{\left(A\otimes B \right)^{\otimes r}}} \P(r)\otimes \P(r)  \left(A \otimes B\right)^{\otimes r} \xrightarrow{\tau} \P(r)\otimes A^{\otimes  r} \otimes \P(r)\otimes B^{\otimes r} \xrightarrow{\gamma_A\otimes\gamma_B} A\otimes B,
\]
where $\tau$ is the map shuffling the tensor factors and $\gamma_A$ and $\gamma_B$ are the structure maps of $A$ and $B$.

Similarly, we can define Hopf cooperads, these are cooperads in the category of associative algebras. More precisely, a Hopf cooperad $\C$ is a cooperad together with maps $\mu_r:\C(r)\otimes \C(r)\rightarrow \C(r)$, such that each $\mu_r$ forms an associative product on $\C(r)$ and the maps $\mu_r$ commute with the cooperadic decomposition. We remark that technically this should be called a co-Hopf cooperad (as it is called in \cite{GetzlerJones}), but by a small abuse of terminology we just call it a Hopf cooperad. A Hopf cooperad is called unital if all the products $\mu_r$ are unital, the units are denote by $\eta_r\in \C(r)$. Since the cooperadic decomposition maps respect the units, every unitary unital Hopf cooperad $\Cu$ comes equipped with a map $\eta:u\Cocom\rightarrow \Cu$, where $u\Cocom$ is the unitary cocommutative cooperad. The map is given by sending the arity $r$ operation of $u\Cocom$ to $\eta_r$.

Similar to algebras, it turns out that the tensor product of two coalgebras with divided symmetries $C$ and $D$ over a Hopf cooperad $\C$ is naturally a $\C$-coalgebra with divided symmetries. The coproduct
\[
\delta_r^{C\otimes D}:C\otimes D \rightarrow \left(\C(r)\otimes \left( C \otimes D \right)^{\otimes r}\right)_{\S_r}
\]
is defined in a dual way as in the algebra case and is explicitly given by the following composition
\[
C\otimes D \xrightarrow{\delta^C_r \otimes \delta^D_r} \left(\C(r)\otimes C^{\otimes r} \otimes \C(r) \otimes D^{\otimes r}\right)_{\S_r} \xrightarrow{\tau} \left(\C(r)\otimes \C(r) \otimes \left(C\otimes D\right)^{\otimes r}\right)_{\S_r} 
\]
\[
\xrightarrow{\mu_r\otimes \id_{\left(C \otimes D\right)^{\otimes r}}}\left( \C(r)\otimes   \left(C \otimes D\right)^{\otimes r}\right)_{\S_r},
\]
where $\delta^C_r$ and $\delta^D_r$ are the the coproduct maps of $C$ and $D$ and $\tau$ is the map that permutes the tensor factors.

\subsection{The generalized shuffle product}

It is a well known fact that the counital cofree conilpotent cocommutative coalgebra on a chain complex $V$ is not just a cocommutative coalgebra but also carries an additional product called the shuffle product. With this shuffle product, the cofree conilpotent cocommutative  coalgebra becomes a Hopf algebra. In this section, we recall a generalization of the shuffle product which was originally due to Moerdijk (see Example 2.3 of \cite{Moerdijk}). Moerdijk showed that the free algebra over a Hopf operad carries a coassociative coproduct. For the constructions in this paper, we need the dual of Moerdijk's construction which we describe here. For more details see also Section 3.2.1 of \cite{Loday}.

Let $\Cu$ be a reduced unitary Hopf cooperad and $V$ a chain complex. Since $\Cu$ is reduced it is canonically augmented. Then $\Cu(V)$, the cofree conilpotent coassociative coalgebra on $V$, has an associative product
\[
\star:\Cu(V)\otimes \Cu(V) \rightarrow \Cu(V)
\]
which is called the generalized shuffle product and is defined as follows. Because $\Cu$ is a Hopf cooperad, the tensor product $\Cu(V)\otimes \Cu(V)$ becomes a $\Cu$-coalgebra as well.  Since $\Cu(V)$ is cofree, we only need to specify what the image is on the cogenerators of $\Cu(V)$. The coaugmentation $\eta:I\rightarrow \Cu$ of  $\Cu$ induces a map $V\cong I(V) \rightarrow \Cu(V)$, by abuse of notation we will denote the image of this map by $V$. The restriction to cogenerators $\tilde{\star}:\Cu(V)\otimes \Cu(V) \rightarrow V$ of the generalized shuffle product is defined by 
\[
\1\star v =v \star \1 =v,
\]
for $v \in V$ and zero otherwise. By cofreeness this extends to a product $\star:\Cu(V)\otimes \Cu(V)\rightarrow \Cu(V)$. We call this product the generalized shuffle product.

The generalized shuffle product has the following compatibility with the coproducts of $\Cu(V)$.

\begin{lemma}\label{lem:shuffleproduct}
   Let $\Cu$ be a unitary Hopf cooperad and $V$ a graded $R$-module. The generalized shuffle $\star:\Cu(V)\otimes \Cu(V) \rightarrow \Cu(V)$ product is a morphism of $\Cu$-coalgebras, in particular the following diagram commutes
  \begin{center} 
   \begin{tikzcd}
       \Cu(V) \otimes \Cu(V) \arrow[rr,"\Delta_{\Cu(V)\otimes\Cu(V)}"] \arrow[d,"\star"]  & &\Cu(\Cu(V)\otimes \Cu(V)) \arrow[d,"\Cu(\star)"]\\
       \Cu(V)\arrow[rr,"\Delta_{\Cu(V)}"]  & &  \Cu(\Cu(V)),
   \end{tikzcd}
\end{center}
where $\Delta_{\Cu(V)\otimes\Cu(V)}$ and $\Delta_{\Cu(V)}$ are the coalgebra maps.
\end{lemma}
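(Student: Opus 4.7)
The plan is to invoke the universal property of the cofree conilpotent $\Cu$-coalgebra $\Cu(V)$ together with the way the shuffle product was defined in the first place. Recall that $\Cu(V)\otimes\Cu(V)$ was equipped with a natural $\Cu$-coalgebra structure in the preceding subsection, using the Hopf cooperad structure (the products $\mu_r:\Cu(r)\otimes\Cu(r)\to\Cu(r)$) to transport two copies of the coproduct of $\Cu(V)$ into a single coproduct on the tensor product. The diagonal map $\Delta_{\Cu(V)\otimes\Cu(V)}$ in the lemma is precisely this coproduct, and $\Delta_{\Cu(V)}$ is the coproduct intrinsic to $\Cu(V)$.

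The product $\star$ itself was defined by first specifying its projection onto cogenerators, $\tilde{\star}:\Cu(V)\otimes\Cu(V)\to V$ (sending $\1\otimes v$ and $v\otimes\1$ to $v$ and being zero on all other $\Cu$-cogenerators), and then extending by cofreeness. The universal property of the cofree conilpotent $\Cu$-coalgebra (an analogue of Proposition 6.3.8 of \cite{LV}, valid over arbitrary $R$ under the $R[\S_r]$-freeness hypothesis via the norm isomorphism \eqref{eq:Normmap}) states that for any $\Cu$-coalgebra $C$, linear maps $C\to V$ are in natural bijection with $\Cu$-coalgebra morphisms $C\to\Cu(V)$, the bijection being given by post-composition with the projection to cogenerators. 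In particular, taking $C=\Cu(V)\otimes\Cu(V)$ with its Hopf-induced $\Cu$-coalgebra structure, the unique $\Cu$-coalgebra morphism extending $\tilde{\star}$ is, by construction, exactly $\star$.

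The diagram in the lemma is then nothing other than the defining relation of a $\Cu$-coalgebra morphism, namely the equation $\Delta_{\Cu(V)}\circ\star=\Cu(\star)\circ\Delta_{\Cu(V)\otimes\Cu(V)}$, so it holds automatically for our $\star$. The only thing that remains to check is the compatibility of the extension with the completion, but this is routine: $\tilde{\star}$ respects the weight filtrations (it maps the weight $r+s$ part of $\Cu(V)\otimes\Cu(V)$ nontrivially only when $(r,s)$ equals $(0,1)$ or $(1,0)$), so its extension to $\star$ is continuous and lands in $\Cu(V)$.

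The main (and essentially only) obstacle is to ensure that the universal property of the cofree conilpotent $\Cu$-coalgebra with divided symmetries is correctly formulated over an arbitrary base ring $R$. Over $R\supseteq\Q$ this is standard, while in general one needs the freeness of the symmetric group actions on $\Cu(r)$ so that the norm map identifies coinvariants with invariants and the same proof as in \cite{LV} goes through; once this is in place, the lemma is a tautology of the construction rather than a computation.
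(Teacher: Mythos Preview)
Your argument is correct and matches the paper's approach: the paper in fact omits the proof entirely, stating that it is ``a straightforward consequence of the definition of the generalized shuffle product and the compatibility of the Hopf product with the cooperad structure.'' Your unpacking of this---that $\star$ was defined by extending $\tilde{\star}$ via the universal property of the cofree $\Cu$-coalgebra applied to the $\Cu$-coalgebra $\Cu(V)\otimes\Cu(V)$, and hence is a $\Cu$-coalgebra morphism by construction---is exactly what the omitted proof would say.
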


The proof of the lemma is omitted since it is a straightforward consequence of the definition of the generalized shuffle product and the compatibility of the Hopf product with the cooperad structure. By the same arguments as in Appendix A of \cite{deKleijnWierstra}, we can extend the generalized shuffle product to the completed $\Cu$-coalgebras.

\begin{lemma}
    Let $\Cu$ be a reduced unitary Hopf cooperad and $V$ a graded $R$-module. The generalized shuffle product extends to the completed cofree $\Cu$-coalgebras
    \[
    \star:\widehat{\Cu(V)}\otimes \widehat{\Cu(V)}\rightarrow \widehat{\Cu(V)}. 
    \]
\end{lemma}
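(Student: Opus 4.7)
My plan is to show that the uncompleted generalized shuffle product $\star$ is filtered with respect to the weight filtration and then extend it by continuity to the completion. Recall that $\bigoplus_{r\geq 0}(\Cu(r)\otimes V^{\otimes r})_{\S_r}$ carries a weight grading with $\1\in\Cu(0)$ in weight $0$ and $(\Cu(r)\otimes V^{\otimes r})_{\S_r}$ in weight $r$ for $r\geq 1$, giving the descending filtration $F^n=\prod_{r\geq n}(\Cu(r)\otimes V^{\otimes r})_{\S_r}$ whose completion is $\widehat{\Cu(V)}$ by definition. What needs to be proved is that $\star$ maps $F^n\otimes F^m$ into $F^{n+m}$, i.e. that $\star$ is additive in weight.

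By Lemma \ref{lem:shuffleproduct} the product $\star$ is a $\Cu$-coalgebra map, so by the universal property of the cofree coalgebra it is identified with $\Cu(\tilde{\star})\circ\Delta_{\Cu(V)\otimes\Cu(V)}$. The cogenerator component $\tilde{\star}$ is non-zero only on the weight-$(0,1)$ and weight-$(1,0)$ pieces of $\Cu(V)\otimes\Cu(V)$, where it lands in $V$ (weight $1$). Applied to an element of bidegree $(n,m)$, the $\Cu$-coalgebra decomposition $\Delta_r$ produces elements of $(\Cu(r)\otimes(\Cu(V)\otimes\Cu(V))^{\otimes r})_{\S_r}$ whose total inner weight equals $n+m$; after applying $\Cu(\tilde{\star})$ a non-zero contribution requires each of the $r$ inner factors to sit in weight $(0,1)$ or $(1,0)$, so the total inner weight equals $r$, forcing $r=n+m$. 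Hence $\star(a\otimes b)$ lies in the weight-$(n+m)$ summand $(\Cu(n+m)\otimes V^{\otimes(n+m)})_{\S_{n+m}}$, as claimed.

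Once filtration-preservation is established the extension is formal: a filtered bilinear map between complete filtered $R$-modules extends uniquely and continuously to the completed tensor product, following the standard construction of completed tensor products recalled in the filtration subsection (compare \cite{Fresse17}, Section 7.3). Applying this to $\star$ yields the desired map $\widehat{\Cu(V)}\otimes\widehat{\Cu(V)}\to\widehat{\Cu(V)}$. The main obstacle is the weight bookkeeping in the middle paragraph: one has to check that the Hopf multiplication $\mu_r$ and the permutation $\tau$ entering the coproduct on $\Cu(V)\otimes\Cu(V)$ do not disrupt the weight count, but both are aritywise and the count is controlled entirely by the rigid form of $\tilde{\star}$. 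As the text indicates, this is essentially the same argument as in Appendix A of \cite{deKleijnWierstra}, with a general unitary Hopf cooperad $\Cu$ in place of the coassociative cooperad used there.
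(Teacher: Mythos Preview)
Your proposal is correct and follows exactly the route the paper points to: the paper gives no detailed argument but defers to Appendix~A of \cite{deKleijnWierstra}, and your proof---showing that $\star$ is additive in the weight grading via the rigid form of $\tilde{\star}$, then extending by continuity to the completion---is precisely the expected content of that reference, now run for a general unitary Hopf cooperad rather than the coassociative one. The only minor slip is notational (on the uncompleted $\Cu(V)$ the filtration steps are $\bigoplus_{r\geq n}$ rather than $\prod_{r\geq n}$), which does not affect the argument.
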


Both $\Cu(V)$ and $\widehat{\Cu(V)}$ therefore become $\Cu$-$\Ass$-bialgebras.

\section{The twisting procedure for algebras over Hopf operads}\label{sec:Lietheory}

In the previous section, we saw that $\widehat{\Cu(V)}$, the completed cofree conilpotent $\Cu$-coalgebra on a graded $R$-module $V$ over a Hopf cooperad $\Cu$, becomes a $\Cu$-$\Ass$-bialgebra with the generalized shuffle product. In this section, we explain how this bialgebra structure can be interpreted as the analog of a Lie group. In this interpretation, the "group structure" is given by the  generalized shuffle product  and the manifold structure is given by the $\Cu(V)$-coalgebra structure and its differential $Q:\Cu(V)\rightarrow \Cu(V)$. We assume that the reader is familiar with the basics of classical Lie theory, otherwise we refer the reader to for example \cite{Kirillov} for an introduction to classical Lie theory.

In this analogy, the tangent space of the "Lie group" $\Cu(V)$ is given by $V$.  It turns out that many concepts from the classical Lie theory of manifolds also extend to our setting. For example, the exponential map and the adjoint representation both have analogs in this setting and together they give rise to the twisting procedure.   

Most of the ideas in this section can be seen as a generalization of Dolgushev's approach to the twisting procedure of $L_\infty$-algebras from \cite{Dolgushev}. This approach to the twist was later extended to $A_\infty$-algebras by the authors. 

\subsection{The exponential map}

In this section, we define the exponential map and show that it has properties similar to the exponential map in the theory of classical Lie groups.  Recall that for  a Lie group $G$ with Lie algebra $\mathfrak{g}$, the exponential map is a function $\exp:\mathfrak{g}\rightarrow G$ that maps $\mathfrak{g}$  to $G$ and captures its local structure. For an element $v \in \mathfrak{g}$, it is explicitly defined by first  considering  the unique one parameter subgroup $\gamma_v:\R \rightarrow G$ whose tangent vector at the identity is equal to $v$. The exponential map of $v$ is then defined as $\exp(v)=\gamma_v(1)$, where $1\in \R$ is the multiplicative unit of $\R$.

We want to generalize this construction to $\Cu$-$\Ass$-bialgebras. For this we first need to find the analog of the real line, which is given by $R[R]$, the group ring of $R$ as an additive group with coefficients in $R$.  This needs to be turned into a $\Cu$-$\Ass$-bialgebra where the $\Cu$-bialgebra structure has divided symmetries. The group ring $R[R]$ has a natural cocommutative  coproduct given by defining the elements of the additive group $R$ as grouplike, this needs to be extended to a $\Cu$-coalgebra structure. To do this we use that every unitary unital Hopf cooperad $\Cu$ comes with a natural map $\eta:u\Cocom \rightarrow \Cu$.



\subsubsection{The analog of the real line}

As mentioned earlier, the analog of the real line $\R$ in our setting is played by the group ring $R[R]$. In the setting of classical Lie groups, the real line $\R$ has a group structure given by addition and a manifold structure given by the usual manifold structure on $\R$. In our setting the group ring $R[R]$ has a "group" structure (strictly speaking monoid structure) but not yet a $\Cu$-coalgebra structure, which would be the analog of the manifold structure on $\R$.

To avoid confusion between $R$ as a coefficient ring and $R$ as an additive group, we will from now on denote $R$ as an additive group by $G$. The elements of $G$ are denoted by $g_\lambda$ with $\lambda \in G$, since $G$ and $R$ are the same as additive groups we will occasionally also use $R$ as an indexing set for the elements $g_\lambda$. The additive unit of $R$ is denoted by $e$ and the multiplicative unit by $g_\1$.  With this notation the group ring has a basis given by the elements $g_\lambda$ and the ring structure of the group ring is defined on these basis elements by $g_\lambda \cdot g_\mu=g_{\lambda+\mu}$, with $\lambda,\mu \in G$. The group ring $R[G]$ further becomes a cocommutative coalgebra with the classical coproduct  
\[
\Delta:R[G] \rightarrow R[G] \otimes R[G]
\]
defined on the basis $\{g_\lambda\}$ by 
\[
\Delta(g_\lambda)=g_\lambda\otimes g_\lambda
\]
for $g_\lambda \in G$. 

The coproduct of $R[G]$ naturally lands in the invariants and not in the coinvariants, so this coalgebra does not have divided symmetries. Since we want to look at morphisms from $R[G]$ to $\Cu(V)$ we need to give $R[G]$ a $\Cu$-coalgebra structure with divided symmetries. 

First, we turn $R[G]$ into a $\Cu$-coalgebra (without divided symmetries) by using the morphism $\eta:u\Cocom \rightarrow \Cu$, so we have a map 
\begin{equation}\label{eq:groupalgebra}
R[G]\rightarrow \widehat{\bigoplus_{r \geq 0} \left( \Cu(r) \otimes R[G]^{\otimes r}\right)^{\S_r} }.
\end{equation}
 Since we assumed that $\Cu$ is free as an $R[\S_r]$-module in each arity, the invariants and coinvariants are isomorphic. We can therefore apply this isomorphism to Equation \ref{eq:groupalgebra} to get a map  
\[
R[G]\rightarrow \widehat{\bigoplus_{r \geq 0} \left(\Cu(r) \otimes R[G]^{\otimes r}\right)_{\S_r} },
\] 
 which turns the group algebra into a $\Cu$-coalgebra with divided symmetries.

\begin{remark}
When $\Q \subseteq R$,  the norm map is always invertible and in this case our constructions work for all unitary reduced unital Hopf cooperads. 
\end{remark}
 

\subsubsection{The exponential map}

The exponential map is defined as follows. We want to define a map which generalizes the exponential map of classical Lie algebras. In particular, we want to associate to each degree $0$ element $v\in V_0$ the unique "one parameter subgroup" of $\widehat{\Cu(V)}$ with tangent vector $v$. With this we mean a $\Cu$-$\Ass$-bialgebra map $\gamma_v:R[G] \to \widehat{\Cu(V)}$  from $R[G]$, the analog of the real line, to $\widehat{\Cu(V)}$ with $\gamma(e)=\1$ and $T(\gamma_v(g_\1))=v$, i.e. the tangent vector at the multiplicative unit $g_\1\in G$ is given by $v$ (see Definition \ref{def:tangentvector}).   This does not necessarily make the map $\gamma_v$ unique, but it becomes unique once we further require that $T(\gamma_v(\lambda g_\1))=\lambda T(\gamma_v(g_\1))$, for $\lambda \in R$. So in particular if the "speed" of the exponential becomes $\lambda$ times bigger, the tangent vector needs to become $\lambda$ times bigger as well.

\begin{proposition}\label{prop:gammav}
Let $v \in V_0$, there is a unique map $\gamma_v:R[G]\rightarrow \widehat{\Cu(V)}$ of $\Cu$-$\Ass$-bialgebras determined by the following properties:
\begin{enumerate}
    \item The tangent vector of $\gamma_v:R[G]\rightarrow \widehat{\Cu(V)}$ at the unit is given by $T(\gamma_v(e))=v$ and $\lambda\in R$ we have that $T(\gamma_v(\lambda \cdot e))= \lambda \cdot v$.
    \item The map $\gamma_v:R[G]\rightarrow \widehat{\Cu(V)}$ is a morphism of bialgebras, i.e. we have $\gamma_v(\alpha \cdot \beta)=\gamma_v(\alpha)\star \gamma_v(\beta)$, with $\alpha,\beta \in R[G]$.
\end{enumerate}
\end{proposition}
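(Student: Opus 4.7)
My plan is to prove existence and uniqueness simultaneously by identifying $\gamma_v$ with an ``exponential'' built from the Hopf cooperad structure, which substitutes for the classical series $\sum w^{\star n}/n!$ that is unavailable over an arbitrary ring. For any $w\in V_0$, define
\[
\exp(w) := \sum_{r \geq 0} \bigl[\eta_r \otimes w^{\otimes r}\bigr] \in \widehat{\Cu(V)},
\]
where $\eta_r \in \Cu(r)$ is the unit of the arity-$r$ associative algebra coming from the Hopf structure on $\Cu$. Since $\eta_r$ is $\S_r$-invariant (algebra units are unique, and the $\S_r$-action is by algebra automorphisms of $\mu_r$), the element $\eta_r \otimes w^{\otimes r}$ descends to a well-defined class in the coinvariants, and the sum converges in the weight completion. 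I then set $\gamma_v(g_\lambda) := \exp(\lambda v)$ and extend $R$-linearly; the identity $\gamma_v(g_e)=\1$ drops out immediately from only the $r=0$ summand surviving.

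Next I verify that $\gamma_v$ is a bialgebra morphism. For the coalgebra part, the arity-$r$ generator of $u\Cocom$ has diagonal cooperadic decomposition, and pushing forward along $\eta:u\Cocom \to \Cu$ shows that $\exp(w)$ is $\Cu$-grouplike, so $\gamma_v$ respects the $\Cu$-coaction. For the algebra part I must show $\exp(\lambda v) \star \exp(\mu v) = \exp((\lambda+\mu)v)$. Using the definition of $\star$ on cogenerators ($\1 \star w = w \star \1 = w$, zero otherwise), a direct computation gives $T\bigl(\exp(\lambda v) \star \exp(\mu v)\bigr) = (\lambda+\mu) v$, and Lemma \ref{lem:shuffleproduct} ensures that the left-hand side is itself $\Cu$-grouplike. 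The equality then reduces to a separate claim: a $\Cu$-grouplike element of $\widehat{\Cu(V)}$ is determined by its tangent vector and necessarily coincides with $\exp$ of that vector. I plan to establish this by an inductive reconstruction on arity, using the freeness of $\Cu(r)$ as an $R[\S_r]$-module to apply the norm isomorphism \eqref{eq:Normmap} and convert the diagonal coaction condition into an equation that forces the arity-$r$ component to be $\bigl[\eta_r \otimes w^{\otimes r}\bigr]$.

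For the uniqueness clause of the proposition the same grouplike characterization suffices: any bialgebra morphism $\gamma': R[G] \to \widehat{\Cu(V)}$ satisfying (1) and (2) must send each grouplike basis element $g_\lambda$ to a $\Cu$-grouplike element, whose tangent is $\lambda v$ by (1) combined with $R$-linearity of $T$, and is therefore equal to $\exp(\lambda v) = \gamma_v(g_\lambda)$; $R$-linearity then determines $\gamma'$ on all of $R[G]$.

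The main obstacle will be the grouplike characterization at the heart of both existence (algebra morphism property) and uniqueness. Over $\Q\subseteq R$ one could simply invert $\exp$ by a logarithmic series; here the argument has to be run entirely through the Hopf cooperad structure and the norm-map isomorphism coming from $R[\S_r]$-freeness of $\Cu$, and organising the inductive bookkeeping so that the $\Cu$-coaction, the product $\star$, and the divided-symmetries conventions are all tracked coherently is where the delicacy sits.
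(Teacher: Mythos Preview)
Your proposal is correct and reaches the same conclusion, but the paper's argument is organised differently and more economically. Rather than writing down the explicit formula $\exp(w)=\sum_{r}[\eta_r\otimes w^{\otimes r}]$ and then proving a ``grouplike characterisation'' by induction on arity, the paper invokes cofreeness of $\widehat{\Cu(V)}$ directly: any $\Cu$-coalgebra map into $\widehat{\Cu(V)}$ is determined by its projection to cogenerators, so prescribing $\widetilde{\gamma_v}(g_\lambda)=\lambda v$ already defines $\gamma_v$ uniquely as a $\Cu$-coalgebra map. The algebra-map property is then checked by observing that both $\gamma_v\circ\mu_{R[G]}$ and $\star\circ(\gamma_v\otimes\gamma_v)$ are $\Cu$-coalgebra maps (the latter via Lemma~\ref{lem:shuffleproduct}), hence agree as soon as their tangent vectors do; a one-line computation shows both tangents equal $(\alpha+\beta)v$ on $g_\alpha\otimes g_\beta$. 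In effect, your ``grouplike characterisation'' is exactly the cofreeness statement specialised to the one-point coalgebra, and the inductive reconstruction you plan is subsumed by the universal property the paper cites. What your route buys is the explicit closed formula for $\exp$, which the paper only unpacks later when rewriting the Maurer-Cartan equation; what the paper's route buys is that uniqueness and the coalgebra property come for free, with no induction needed.
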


\begin{proof}
Since $\widehat{\Cu(V)}$ is completed cofree every map of $\Cu$-coalgebras is determined by its image on the cogenerators. As a morphism of $\Cu$-coalgebras, the map $\gamma_v$ is therefore uniquely defined by the $R$-linear map 
\[
\widetilde{\gamma_v}:R[G]\rightarrow V
\]
which is defined on basis elements $g_\lambda\in R[G]$ (with $\lambda \in R$) by
\[
\widetilde{\gamma_v}(g_\lambda)=\lambda \cdot v.
\]
This determines $\gamma_v$ uniquely as a $\Cu$-coalgebra map, what is left to check is whether this map is also a bialgebra map. So we need to check that it commutes with the associative products of $R[G]$ and $\widehat{\Cu(V)}$.

To show that the exponential map is a morphism of bialgebras, we need to check that 
\[
\gamma_v(g_\alpha \cdot g_\beta)=\gamma_v(g_\alpha) \star \gamma_v(g_\beta).
\]
This is equivalent to showing that the following diagram of $\Cu$-coalgebra maps commutes:
\[
\begin{tikzcd}
R[G] \otimes R[G] \arrow[d,"\mu_{R[G]}"]  \arrow[r,"\gamma_v\otimes\gamma_v"] &\Cu(V) \otimes \Cu(V) \arrow[d,"\star"]  \\
R[G] \arrow[r,"\gamma_v"]   & \Cu(R[G]),
\end{tikzcd}
\]
where $\star$ is the generalized shuffle product and $\mu_{R[G]}$ is the multiplication of the group ring $R[G]$. Since $\Cu(R[G])$ is cofree in the completed sense, every map is determined by its image on the cogenerators. We therefore only need to check that the maps $\gamma_v \circ \mu_{R_{[G]}}$ and $\star \circ\left( \gamma_v \otimes \gamma_v \right)$ have the same tangent vector, which is equal to its image on the cogenerators. In other words, we have to show that for $\alpha \otimes \beta \in R[G] \otimes R[G]$
\begin{equation}\label{eq:tangentvectors}
T\left(\gamma_v \circ \mu_{R_{[G]}}\right)=T\left(\star \circ\left( \gamma_v \otimes \gamma_v \right)\right).
\end{equation}

If we do this explicit computation, we see that the left hand side of Equation \ref{eq:tangentvectors} is equal to
\[
T(\gamma_v \circ \mu_{R_{[G]}}(\alpha \otimes \beta))=\alpha +\beta.
\]  
The right hand side of Equation \ref{eq:tangentvectors} is computed as follows. First of all, the elements $\gamma_v(\alpha)$ (resp. $\gamma_v(\beta)$) are of the form $1+\alpha + \mbox{higher order terms}$ (resp. $1+\beta + \mbox{higher order terms}$), where the higher order terms are given by coproducts of arity $2$ and greater. The generalized shuffle product is then given by $\gamma_v(\alpha)\star \gamma_v(\beta)=1+\alpha + \beta + \mbox{higher order terms}$, so after projecting on cogenerators it is given by $\alpha+\beta$. The maps therefore have the same projection onto the cogenerators and are therefore equal. The map $\gamma_v$ is therefore a map of bialgebras.    
\end{proof}

From these properties, it follows that $\gamma_v$ is determined by the image of $g_{\1}$.

\begin{lemma}
From the properties of Properties \ref{prop:gammav}, it also follows that the $\Cu$-$\Ass$-bialgebra map $\gamma_v:R[G]\rightarrow \widehat{\Cu(V)}$ is equivalent to a $\Cu$-coalgebra map $\gamma_v':R \rightarrow \widehat{\Cu(V)}$.    
\end{lemma}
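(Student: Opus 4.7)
The plan is to identify both sides of the equivalence with the set $V_0$ of degree zero elements of $V$, via matching bijections. Essentially, the strong bialgebra constraint on $\gamma_v$ collapses the apparently larger datum of a map out of $R[G]$ down to a single tangent vector, which is precisely the data of a $\Cu$-coalgebra map out of $R$.

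First I would treat the right-hand side. Since $\widehat{\Cu(V)}$ is the completed cofree conilpotent $\Cu$-coalgebra, any $\Cu$-coalgebra map into it is determined by its composition with the projection $\pi:\widehat{\Cu(V)} \to V$ onto cogenerators; this is the completed analogue of Lemma \ref{lem:coderivation}. Viewing $R$ as the trivial $\Cu$-coalgebra via the coaugmentation $I \to \Cu$, a $\Cu$-coalgebra map $\gamma_v':R\to\widehat{\Cu(V)}$ is then equivalent to an $R$-linear, degree zero map $R\to V$, which in turn is determined by the image of $1 \in R$, namely an element $v\in V_0$. For the left-hand side, Proposition \ref{prop:gammav} already states that a $\Cu$-$\Ass$-bialgebra map $\gamma_v:R[G]\to\widehat{\Cu(V)}$ is uniquely determined by the tangent vector $T(\gamma_v(g_\1))\in V_0$. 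Both sides are thus in bijection with $V_0$.

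It then remains to verify that the two bijections agree under the natural correspondence $\gamma_v \leftrightarrow \gamma_v'$. In one direction, given $\gamma_v$ one defines $\gamma_v'$ by precomposition with the $\Cu$-coalgebra map $R \to R[G]$ that sends $1\mapsto g_\1$ (which is a $\Cu$-coalgebra map since $g_\1$ is grouplike in the cocommutative structure on $R[G]$ that is pushed forward to a $\Cu$-coalgebra structure via $\eta:u\Cocom\to\Cu$); by construction $\pi\circ\gamma_v'(1) = \pi\circ\gamma_v(g_\1) = v$. Conversely, starting from $\gamma_v'$ with cogenerator image $v$, Proposition \ref{prop:gammav} produces the unique bialgebra map $\gamma_v$ with tangent vector $v$, and the two assignments are mutually inverse.

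The only mild subtlety is ensuring that the cofreeness argument in the first paragraph is valid in the completed setting rather than for $\Cu(V)$ itself, but this is not a real obstacle: $\widehat{\Cu(V)}$ is the inverse limit of its weight-truncated quotients and cofreeness is preserved weightwise. Beyond this bookkeeping point, the lemma is a direct consequence of Proposition \ref{prop:gammav} combined with the universal property of the completed cofree $\Cu$-coalgebra.
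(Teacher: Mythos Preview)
Your argument is essentially the same as the paper's: both reduce the data on each side to a single element of $V_0$ via cofreeness of $\widehat{\Cu(V)}$ and Proposition~\ref{prop:gammav}, and then check that the two reductions match. The paper phrases this as an explicit back-and-forth, setting $\gamma_v'(\lambda):=\gamma_v(\lambda g_{\1})$ in one direction and $T(\gamma_v(g_\lambda)):=T(\gamma_v'(\lambda))$ in the other, but the content is identical.

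One small inaccuracy to fix: you describe the $\Cu$-coalgebra structure on $R$ as ``the trivial $\Cu$-coalgebra via the coaugmentation $I\to\Cu$.'' That would make $1\in R$ primitive, and then your later map $R\to R[G]$, $1\mapsto g_{\1}$, would \emph{not} be a $\Cu$-coalgebra map (since $g_{\1}$ is grouplike). The intended structure---and the one you implicitly use in your compatibility check---is the grouplike one induced by $\eta:u\Cocom\to\Cu$, i.e.\ the identification $R\cong N_*(\Delta^0)$. Your cofreeness bijection with $V_0$ is unaffected either way, so the proof stands once this is corrected.
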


\begin{proof}
    It is clear that a $\Cu$-$\Ass$-bialgebra map $\gamma_v:R[G]\rightarrow \widehat{\Cu(V)}$ is determines a $\Cu$-coalgebra map $\gamma_v':R \rightarrow \widehat{\Cu(V)}$. This map is given by $\gamma_v'(\lambda):=\gamma_v(\lambda g_{\1})$, with $\lambda \in R$.

    To go in the other direction, we assume that we have a map $\gamma_v':R \rightarrow \widehat{\Cu(V)} $ and extend it to a map  $\gamma_v:R[G]\rightarrow \widehat{\Cu(V)}$. Since $\widehat{\Cu(V)}$ is cofree, we only need to define its projection on cogenerators, the image of the generators $g_\lambda\R[G]$ by the following formula
    \[
    T(\gamma_v(g_\lambda)):=T(\gamma_v'(\lambda)).
    \]
    It follows from the properties of Propositions \ref{prop:gammav} that this can indeed be extended to a bialgebra map. 
\end{proof}

Now that we have the definition of the analog of a  "one-parameter subgroup" we can define the exponential map in exactly the same way as it is done in classical Lie theory.

\begin{definition}
    The exponential map is defined by
    \[
    \exp:V_0\rightarrow \widehat{\Cu(V)}
    \]
    \[
    exp(v):=\gamma_v(g_\1).
    \]
\end{definition}

Similar to the classical exponential map, our exponential map has the following properties. 

\begin{proposition}\label{prop:propertiesofexp}
Let $v \in V_0$, $\kappa,\lambda \in R$ and $x\in \Cu(V)$. The exponential map has the following properties:
\begin{enumerate}
    \item $\exp(\kappa v) \star \exp(\lambda v)= exp((\kappa+\lambda )v)$.
    \item $\exp(-v)\star \exp(v)=\1=\exp(v)\star \exp(-v)$ or alternatively $\exp(-v)$ is a multiplicative inverse to $\exp(v)$.
    \item The map $\exp(v)\star -:\widehat{\Cu(V)}\rightarrow \widehat{\Cu(V)}$ is a morphism of $\Cu$-coalgebras.
\end{enumerate}
\end{proposition}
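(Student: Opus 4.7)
The plan is to prove all three properties from the single observation that $\gamma_v$ is a bialgebra map and that $\widehat{\Cu(V)}$ is cofree, reducing each claim to a comparison of tangent vectors.

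First I would establish the identity $\gamma_v(g_\lambda) = \exp(\lambda v)$ for all $\lambda \in R$. The element $g_\lambda \in R[G]$ is grouplike for the cocommutative coproduct and hence, under the coalgebra structure induced by $\eta: u\Cocom \to \Cu$, generates a sub-$\Cu$-coalgebra isomorphic to $R$ via the map $1 \mapsto g_\lambda$. Composing this inclusion with $\gamma_v$ yields a $\Cu$-coalgebra map $R \to \widehat{\Cu(V)}$ whose tangent vector at $1$ is $\widetilde{\gamma_v}(g_\lambda) = \lambda v$. On the other hand, the lemma following Proposition \ref{prop:gammav} produces from $\gamma_{\lambda v}$ a $\Cu$-coalgebra map $R \to \widehat{\Cu(V)}$ with the same tangent vector $\lambda v$. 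By cofreeness of $\widehat{\Cu(V)}$ (Lemma \ref{lem:coderivation} applied to coalgebra morphisms instead of coderivations), both maps coincide, so in particular $\gamma_v(g_\lambda) = \gamma_{\lambda v}(g_\1) = \exp(\lambda v)$. Property (1) is then immediate from the bialgebra property of $\gamma_v$:
\[
\exp(\kappa v) \star \exp(\lambda v) = \gamma_v(g_\kappa) \star \gamma_v(g_\lambda) = \gamma_v(g_\kappa \cdot g_\lambda) = \gamma_v(g_{\kappa+\lambda}) = \exp((\kappa+\lambda)v).
\]

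Property (2) follows from (1) by taking $\lambda = -\kappa$, which reduces both sides to $\exp(0)$, so it suffices to verify that $\exp(0) = \1$. For this, observe that $\gamma_0: R[G] \to \widehat{\Cu(V)}$ has $\widetilde{\gamma_0}(g_\lambda) = \lambda \cdot 0 = 0$, and the constant map $g_\lambda \mapsto \1$ is a bialgebra map with the same (zero) projection onto cogenerators; by the same cofreeness argument the two maps agree, giving $\exp(0) = \gamma_0(g_\1) = \1$.

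For property (3), I would factor the map $\exp(v) \star -$ as
\[
\widehat{\Cu(V)} \xrightarrow{\exp(v) \otimes \id} \widehat{\Cu(V)} \otimes \widehat{\Cu(V)} \xrightarrow{\star} \widehat{\Cu(V)}.
\]
The right factor is a morphism of $\Cu$-coalgebras by Lemma \ref{lem:shuffleproduct}, so it remains to check that the left factor is a $\Cu$-coalgebra morphism. This is precisely the statement that $\exp(v)$ is a grouplike element, i.e. that the $\Cu$-coproduct $\delta_r(\exp(v))$ equals $\eta_r \otimes \exp(v)^{\otimes r}$, which follows from the identification $\exp(v) = \gamma_v(g_\1)$ established above together with the fact that $g_\1$ is grouplike in $R[G]$ and $\gamma_v$ is a $\Cu$-coalgebra map.

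The main obstacle, and the one I would dwell on, is the identification $\gamma_v(g_\lambda) = \exp(\lambda v)$: it is the step that requires carefully extracting a uniqueness statement from cofreeness of $\widehat{\Cu(V)}$ for coalgebra morphisms rather than coderivations, and verifying that the $\Cu$-coalgebra structure on $R$ induced from the coaugmentation $\eta: u\Cocom \to \Cu$ really does make $g_\lambda$ into a grouplike element whose image has the prescribed tangent vector. Once this identification is in place, properties (1)--(3) drop out as largely formal consequences of the bialgebra axioms and Lemma \ref{lem:shuffleproduct}.
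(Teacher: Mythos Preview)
Your proposal is correct and follows essentially the same route as the paper: parts (1) and (2) are deduced from the bialgebra property of $\gamma_v$ established in Proposition~\ref{prop:gammav}, and part (3) uses exactly the two ingredients the paper uses, namely Lemma~\ref{lem:shuffleproduct} and the grouplike property $\Delta(\exp(v)) = \sum_{r\geq 0}\eta_r\otimes \exp(v)^{\otimes r}$ inherited from $g_{\1}$ via $\gamma_v$. The only real difference is that you make the identification $\gamma_v(g_\lambda)=\exp(\lambda v)$ explicit (via a cofreeness/tangent-vector comparison), whereas the paper absorbs this into the phrase ``follow immediately from Proposition~\ref{prop:gammav}''; your extra care here, and your separate check that $\exp(0)=\1$, are justified but do not change the argument's substance.
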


\begin{proof}
Part $1$ and part $2$ follow immediately from   Proposition \ref{prop:gammav} and their proof is omitted. We prove Part $3$ as follows. We need to show that the map $\exp(v)\star -:\widehat{\Cu(V)}\rightarrow \widehat{\Cu(V)}$ commutes with the coproduct, so if $x \in \widehat{\Cu(V)}$ then we need to show that  
\begin{equation}\label{eq:expisaCmorphism}
\Delta_{\widehat{\Cu(V)}} (\exp(v)\star x) =\Delta(\star)(\Delta_{\widehat{\Cu(v)}\otimes \widehat{\Cu(V)}}( \exp(v) \otimes x)),
\end{equation}
with $x \in \widehat{\Cu(V)}$ and $\Delta(\star)$ is the coproduct of the generalized shuffle product $\star$. To show that Equation \ref{eq:expisaCmorphism} holds, we need two ingredients. The first one is Lemma \ref{lem:shuffleproduct}   and the second one is that $\exp(v)$ satisfies a grouplike property. Since the element $g_{\1}$ is grouplike in $R[G]$, its coproduct is given by $\Delta(g_{\1})=\sum_{r \geq 0}\eta_r \otimes g_{\1}^{\otimes r}$ ,where $\eta_r$ is the image of $u\Cocom(r)$ in $\Cu(r)$. Since $\gamma_v$ is a morphism of $\Cu$-bialgebras, the coproduct of $\Delta(\exp(v))=\Delta(\gamma_v(g_{\1}))=\sum_{r \geq 0} \eta_r \otimes \exp(v)^{\otimes r}$. If we now use that $\eta_r$ the unit is for the product $\mu_r:\Cu(r) \otimes \Cu(r) \rightarrow \Cu(r)$ and Lemma \ref{lem:shuffleproduct}, we see that Equation \ref{eq:expisaCmorphism} holds.
\end{proof}

The following corollary  follows almost immediately from Proposition \ref{prop:propertiesofexp}. Note that in second part of the following proposition we do not use the completion of the coalgebra $\Cu(V)$.

\begin{corollary}
The element $\exp(v)$ is invertible in $\widehat{\Cu(V)}$ with the generalized shuffle product, its inverse is given by $\exp(-v)$. The morphism
\[
\exp(v)\star -: \Cu(V) \rightarrow \Cu(V)
\]
is therefore an isomorphism with inverse $\exp(-v)\star -$.
\end{corollary}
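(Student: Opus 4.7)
The plan is to derive both assertions directly from Proposition \ref{prop:propertiesofexp}, using in addition only the associativity and unit property of the generalized shuffle product $\star$. The first assertion is an immediate reformulation of part $(2)$ of Proposition \ref{prop:propertiesofexp}: the identities $\exp(-v)\star\exp(v)=\1=\exp(v)\star\exp(-v)$ are precisely the statement that $\exp(v)$ is two-sided invertible in the monoid $(\widehat{\Cu(V)},\star,\1)$ with inverse $\exp(-v)$; no further argument is needed.

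For the second assertion, I would introduce the shorthand $L_v := \exp(v)\star(-)$ and $L_{-v} := \exp(-v)\star(-)$ and verify that these maps are mutually inverse coalgebra automorphisms. Invoking associativity of $\star$ and the fact that $\1$ is the two-sided unit, one computes
\[
L_v\bigl(L_{-v}(x)\bigr) \;=\; \exp(v)\star\bigl(\exp(-v)\star x\bigr) \;=\; \bigl(\exp(v)\star\exp(-v)\bigr)\star x \;=\; \1\star x \;=\; x
\]
for every $x\in\Cu(V)$, and symmetrically $L_{-v}\circ L_v=\id$, so that $L_v$ and $L_{-v}$ are mutually inverse as $R$-linear maps. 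Part $(3)$ of Proposition \ref{prop:propertiesofexp}, applied to both $v$ and $-v$, then supplies the fact that $L_v$ and $L_{-v}$ are morphisms of $\Cu$-coalgebras, so the two maps constitute mutually inverse coalgebra isomorphisms as claimed.

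The one technical point that merits care, and the only real obstacle, is the claim that $L_v$ and $L_{-v}$ preserve the non-completed coalgebra $\Cu(V)$ rather than merely its weight-completion $\widehat{\Cu(V)}$. I would dispatch this by combining the weight-additivity of the generalized shuffle product with the standing convention that all objects are filtered and complete with respect to the filtration induced from $V$: since $V = F^1V$, the weight-$r$ stratum of $\Cu(V)$ lies in filtration degree at least $r$, so at each filtration stratum only finitely many weight components contribute and the sums defining $L_v(x)$ remain within $\Cu(V)$. Beyond this bookkeeping I anticipate no essential difficulty, the content of the corollary being really just a packaging of parts $(2)$ and $(3)$ of Proposition \ref{prop:propertiesofexp}.
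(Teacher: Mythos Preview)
Your proposal is correct and follows essentially the same approach as the paper. The paper's own proof is terser: it simply says invertibility follows immediately from Proposition~\ref{prop:propertiesofexp}, and for the preservation of the non-completed $\Cu(V)$ it defers to ``the same arguments as in Appendix~A of \cite{deKleijnWierstra}'' rather than sketching the filtration/weight argument you give. Your explicit associativity computation and invocation of part~(3) for the coalgebra structure are just a spelled-out version of what the paper leaves implicit.
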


\begin{proof}
    The fact that $\exp(v)\star -$ is invertible follows immediately from Proposition \ref{prop:propertiesofexp}. The fact that the map $\exp(v)$ preserves the subspace $\Cu(V) \subset \widehat{\Cu(V)}$ follows from the same arguments as in Appendix A of \cite{deKleijnWierstra}.
\end{proof}



\subsection{The twisting procedure and the Maurer-Cartan equation}

Using the exponential from the previous section, we can define the twisting procedure for $\Omega \C$-algebras. This is similar to the adjoint representation of a Lie group on its Lie algebra. 

\begin{definition}
    Let $(\Cu(V),Q)$ be a curved $\Omega \C$-algebra and $v\in V_0$. Then we define $Q^v:\Cu(V)\rightarrow \Cu(V)$, the differential of $(\Cu(V),Q)$ twisted by $v$ as
    \[
    Q^v:\Cu(V)\rightarrow \Cu(V)
    \]
    \[
    Q^v(x):=\exp(-v)\star Q\left(\exp(v)\star x\right),
    \]
    with $x \in \Cu(V)$.
\end{definition}

\begin{theorem}\label{thrm:twistingprocedure}
    The twisted differential $Q^v$ is a coderivation and squares to zero. The twist of a curved $\Omega \C$-algebra is therefore again a curved $\Omega \C$-algebra. 
\end{theorem}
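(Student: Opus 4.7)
The strategy is to exhibit $Q^v$ as a conjugate of $Q$ by the left-multiplication operator $L_w(x) := w \star x$, so that
\[
Q^v = L_{\exp(-v)} \circ Q \circ L_{\exp(v)},
\]
and then to check that both the coderivation property and the square-zero property are stable under such conjugation. All the necessary properties of $L_{\exp(\pm v)}$ have already been assembled in Proposition \ref{prop:propertiesofexp} and its corollary.

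For the coderivation part, Proposition \ref{prop:propertiesofexp}(3) (together with the subsequent corollary) tells me that $L_{\exp(v)}$ and $L_{\exp(-v)}$ are morphisms of $\Cu$-coalgebras which, by part (2) of the same proposition and associativity of $\star$, are mutually inverse automorphisms of $\Cu(V)$ (using that $\1$ acts as a two-sided unit for $\star$, which follows from the defining formula $\1 \star v = v \star \1 = v$ on cogenerators and cofreeness). It is then a general fact that if $f$ is a $\Cu$-coalgebra isomorphism and $D$ is a $\Cu$-coderivation, then $f^{-1} \circ D \circ f$ is again a coderivation: writing the coderivation identity in arity $r$ as
\[
\Delta_r \circ D = \sum_{i=1}^r (\id_{\Cu(r)} \otimes \id^{\otimes i-1} \otimes D \otimes \id^{\otimes r-i}) \circ \Delta_r
\]
and the morphism identity as $\Delta_r \circ f = (\id_{\Cu(r)} \otimes f^{\otimes r}) \circ \Delta_r$, the $f$'s cancel against $f^{-1}$'s on the non-$D$ tensor factors. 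Applying this with $f = L_{\exp(v)}$ and $D = Q$ shows $Q^v$ is a coderivation.

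For the square-zero part, the associativity of the generalized shuffle product gives $L_{\exp(v)} \circ L_{\exp(-v)} = L_{\exp(v) \star \exp(-v)} = L_{\1} = \id$, so
\[
(Q^v)^2 = L_{\exp(-v)} \circ Q \circ L_{\exp(v)} \circ L_{\exp(-v)} \circ Q \circ L_{\exp(v)} = L_{\exp(-v)} \circ Q^2 \circ L_{\exp(v)} = 0
\]
because $Q^2 = 0$ by the definition of a curved $\Omega\C$-algebra.

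The main subtlety is bookkeeping around completions: $Q$ is defined on $\Cu(V)$ but $\exp(v)$ lives a priori in $\widehat{\Cu(V)}$. This is handled by the corollary to Proposition \ref{prop:propertiesofexp}, which states that $\exp(v) \star -$ restricts to an isomorphism of the uncompleted $\Cu(V)$ with inverse $\exp(-v) \star -$. Thus every composition above is a genuine endomorphism of $\Cu(V)$ and no convergence issue arises, leaving only the two essentially formal verifications above.
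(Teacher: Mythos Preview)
Your proof is correct and follows essentially the same approach as the paper: both arguments recognize $Q^v$ as the conjugate of $Q$ by the $\Cu$-coalgebra automorphism $L_{\exp(v)}$, deduce the coderivation property from the general fact that conjugating a coderivation by a coalgebra isomorphism yields a coderivation, and obtain $(Q^v)^2=0$ from the cancellation $L_{\exp(v)}\circ L_{\exp(-v)}=\id$ together with $Q^2=0$. Your write-up is somewhat more detailed (spelling out the coderivation identity in arity $r$ and the completion issue), but the substance is the same.
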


\begin{proof}
    We need to show that the twisted differential $\Q^v$ is again a coderivation and squares to zero. First note that by Proposition \ref{prop:propertiesofexp}, multiplication by $\exp(v)$ is a $\Cu$-coalgebra isomorphism with inverse $\exp(-v)$. Since the conjugation of a coderivation by an isomorphism is again a coderivation, the map $Q^v$ is again a coderivation. 

    The map $Q^v$ squares to zero since 
    \begin{align*}
    (Q^v)^2(x)&=\exp(-v)\star Q(\exp(v)\star \exp(-v)\star Q(\exp(v) \star x))\\
    &= \exp(-v)\star Q (\1 \star Q(\exp(v) \star x)) \\
    &= \exp(-v) \star Q^2(\exp(v) \star x) \\
    &=0
    \end{align*}
    where we used that multiplication by $\1$ is the identity. So the twisted differential squares again to zero which proves the theorem.
\end{proof}



Using this notion of the twist, we can define Maurer-Cartan elements as those elements that produce a flat $\Omega \C$-algebra after twisting.

\begin{definition}\label{def:MCequation}
Let $(\Cu(V),Q)$ be a curved $\Omega \C$-algebra and let $v\in V_0$, then the Maurer-Cartan equation is defined as
\[
\tilde{Q}(exp(v))=0.
\]
An element is called a Maurer-Cartan element if it satisfies the Maurer-Cartan equation, the set of Maurer-Cartan elements is denoted by $\MC(V)\subseteq V$.
\end{definition}

The Maurer-Cartan equation can be made more explicit by using the explicit morphism $\eta:u\Cocom \rightarrow \Cu$. If $\eta_r'$ denotes the basis element for $u\Cocom(r)$ and $\eta_r=\varphi(\eta_r')$ then the Maurer-Cartan equation can be rewritten as 
\[
\sum_{r \geq 0} \tilde{Q}_r\left(\eta_r\otimes v^{\otimes r}\right) =0. 
\]
Using the same arguments as in Section 4 of \cite{deKleijnWierstra}, we get the following propostion which shows that the Maurer-Cartan equation is indeed a flatness equation.

\begin{proposition}
    Let $(\Cu(V),Q)$ be a curved $\Omega \C$-algebra, then $(\Cu(V),Q^v)$, the twist of $Q$ by an element $v\in V_0$, is flat if and only if $v$ is a Maurer-Cartan element.
\end{proposition}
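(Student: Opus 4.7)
The plan is to show that the curvature of the twisted $\Omega \C$-algebra satisfies $\tilde{Q}^v(\1) = \tilde{Q}(\exp(v))$, so that the flatness condition $\tilde{Q}^v(\1) = 0$ coincides with the Maurer-Cartan equation of Definition \ref{def:MCequation}. Since $\1$ is the unit of the generalized shuffle product (by Proposition \ref{prop:propertiesofexp}), we have $Q^v(\1) = \exp(-v) \star Q(\exp(v) \star \1) = \exp(-v) \star Q(\exp(v))$, and hence $\tilde{Q}^v(\1) = T\bigl(\exp(-v) \star Q(\exp(v))\bigr)$. The whole proof thus reduces to understanding the tangent projection $T$ applied to a left shuffle-translate by $\exp(-v)$.

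Next, I would establish the key formula
\[
T(\exp(-v) \star y) = T(y) - c(y)\, v,
\]
for any $y \in \widehat{\Cu(V)}$, where $c(y) \in R$ is the coefficient of the arity-$0$ element $\1$ in $y$. To see this, recall that $\star$ is a $\Cu$-coalgebra map whose projection to cogenerators $\tilde{\star}:\Cu(V)\otimes\Cu(V)\to V$ is nonzero only on the components $\1 \otimes V$ and $V \otimes \1$, where it is the identity on $V$. The explicit bialgebra map $\gamma_{-v}$ from Proposition \ref{prop:gammav} expands $\exp(-v)$ as $\sum_{r\geq 0} \eta_r \otimes (-v)^{\otimes r}$, whose arity-$0$ and arity-$1$ components are $\1$ and $-v$ respectively, with higher-arity components giving no contribution under $\tilde{\star}$. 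Feeding this into $\tilde{\star}(\exp(-v)\otimes y)$ yields the formula.

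The last step is to observe that $c(Q(\exp(v))) = 0$. Since $Q$ is a coderivation on the cofree conilpotent $\Cu$-coalgebra $\Cu(V)$ determined by its cogenerator projection $\tilde{Q}:\Cu(V)\to V$ (Lemma \ref{lem:coderivation}), every output of $Q$ contains at least one $V$-slot filled by the image of $\tilde{Q}$, so $Q$ takes values in the summand of arity at least $1$ of $\Cu(V)$. Applying the formula from step 2 with $y=Q(\exp(v))$ then gives $\tilde{Q}^v(\1) = T(Q(\exp(v))) = \tilde{Q}(\exp(v))$, which vanishes precisely when $v$ is a Maurer-Cartan element.

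I expect the main obstacle to be making the last step rigorous: the assertion that a coderivation on $\Cu(V)$ has no arity-$0$ component in its image requires the explicit cooperadic formula for extending $\tilde{Q}$ to $Q$, in the spirit of Proposition 6.3.8 of \cite{LV}. The tangent-vector computation above is also delicate because one must track the $\Cu$-coalgebra decomposition carefully to verify that higher-arity components of $\exp(-v)$ genuinely contribute zero under $\tilde{\star}$.
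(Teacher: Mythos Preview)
Your proof is correct and follows the natural approach: the paper itself does not give a proof here but defers to Section~4 of \cite{deKleijnWierstra}, where the same computation is carried out in the $A_\infty$ setting, so your argument is essentially the one intended. For the step you flag as delicate---that $Q$ has no arity-$0$ output---the cleanest justification is simply that every coderivation on a counital coalgebra is annihilated by the counit (i.e.\ $\epsilon \circ Q = 0$), which immediately gives $c(Q(x)) = 0$ for all $x$ without needing the full explicit extension formula.
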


\section{The Maurer-Cartan simplicial set}\label{sec:MCsimplicialsets}

In the theory of $L_\infty$- and $A_\infty$-algebras, the set of Maurer-Cartan elements can be extended to a simplicial set which encodes all the relevant gauges. These Maurer-Cartan simplicial sets have many applications in deformation theory, rational homotopy theory and related fields. In this section, we  show that these Maurer-Cartan simplicial sets can be constructed in much greater generality. In particular, we show that one can construct a Maurer-Cartan simplicial set for every unitary reduced unital Hopf cooperad $\Cu$ with a map $\E_\infty \rightarrow \C$, where $\E_\infty$ denotes the cochains on the Barratt-Eccles operad and $\C$ is the non-unitary part of $\C$. 

\begin{convention}
    From now on we only work with flat $\Omega \C$-algebras.
\end{convention}

\subsection{The Maurer-Cartan simplicial set}

We construct the Maurer-Cartan simplicial set as follows. Suppose that $\Cu$ is a unitary reduced unital Hopf cooperad, then we have seen in Section \ref{sec:Lietheory} that there exists a morphism $u\Cocom \rightarrow \Cu$ and that we have an exponential map and a Maurer-Cartan equation. We further saw  that if $V$ is a curved $\Omega \C$-algebra, then the exponential map is defined via the "one parameter subgroup" $\gamma_v:R[G]\rightarrow \Cu(V)$, which in turn was determined by a morphism of $\Cu$-coalgebras $R\rightarrow \Cu(V)$. 

Since $R$ can be identified with $N_*(\Delta^0;R)$, the chains on $\Delta^0$ with coefficients in $R$, the set of Maurer-Cartan elements is equal to the set of morphisms of $N_*(\Delta^0;R)$, considered as a $\Cu$-coalgebra using the morphism $u\Cocom\rightarrow \Cu$. to $\Cu(V)$. 

To define the higher simplices of the Maurer-Cartan simplicial set, we use the higher standard simplices $\Delta^n$. It is well known that the collection of standard simplices forms a cosimplicial object in the category of simplicial sets (see \cite{GoerssJardine} for example). So if we apply $N_*(-,R)$, the normalized chains functor with coefficients in $R$, we get a cosimplicial object in the category of chain complexes. To shorten the notation a little, we will from now on omit the coefficients in the notation for the normalized chains and always implicitly assume they are the ring $R$. To define the Maurer-Cartan simplicial set, we need to turn this into a cosimplicial object in the category of $\C$-coalgebras. The Maurer-Cartan simplicial set is then morally defined as 
\begin{equation}\label{eq:MCsSet}
\MC_n(V):=\hom_{\C-\mbox{coalg}}(N_*(\Delta^n),\widehat{\Cu(V)}).
\end{equation}

To equip $N_*(\Delta^n)$ with a $\C$-coalgebra structure, we first use the fact that it is an $\E_\infty$-coalgebra. There are several choices possible for an $\E_\infty$-coalgebra structure on the normalized chains. There are for example the surjection operad and the Barratt-Eccles operad (see \cite{McClureSmith} and \cite{BergerFresse}). In the rest of this paper, we  use the Barratt-Eccles operad from \cite{BergerFresse} and instead of working with coalgebras over operads, we work with the dual of the Barratt-Eccles operad, which we denote by $\E_\infty$. Since the Barratt-Eccles operad is of finite type, coalgebras over an operad are equivalent to coalgebras over the dual cooperad. So to turn $N_*(\Delta^n)$ into a $\C$-coalgebra, we need to assume that we have a morphism of cooperads $\varphi:\E_\infty\rightarrow \C$. Since the Barratt-Eccles cooperad has a free symmetric  group action, we can identify invariants and coinvariants so there is no difference between coalgebras with divided symmetries and ordinary coalgebras.

Since we assumed that all $\Omega \C$-algebras are flat, we see that a flat $\Omega \C$-algebra structure on a chain complex $V$ is equivalent to a square-zero coderivation on $\C(V)$ (so the non counital version). In this case, the set of $n$-simplices of the Maurer-Cartan simplicial set from Equation \ref{eq:MCsSet} is given by the set of maps of $\C$-coalgebras
\begin{equation}\label{eq:MCsSet2}
    \MC_n(V):=\hom_{\C-\mbox{coalg}}(N_*(\Delta^n),\widehat{\C(V)}).
\end{equation}
This is because we can identify $\C(V)$ with the relative bar construction on $V$ (see \cite{LV}, Chapter 11). Recall that since $(\C(V),Q)$ is cofree, every map to it is determined by its image on cogenerators. The converse is however not true since not every map commutes with the differential $Q$. We can however form the convolution algebra $\hom_{R}\left(N_*(\Delta^n),V\right)$, which becomes an algebra over the convolution operad $\hom(\C,\Omega\C)$ (see \cite{BergerMoerdijk}). As is explained in Chapter 11 of \cite{LV}, there is a Maurer-Cartan equation in this convolution algebra whose solutions correspond to the maps that commute with the differential $Q$. To define this Maurer-Cartan equation, we first define the $\star_\tau$-operator.  

Let $\C$ be a cooperad and $\P$ an operad both with free symmetric group actions. Let $(C,\Delta_C)$ be a $\C$-coalgebra and $(V,\gamma_V)$ a $\P$-algebra, since the symmetric group actions are free there is no difference between divided symmetries and no divided symmetries. Further suppose that we have an operadic twisting morphism $\tau:\C\rightarrow \P$. In our case $\P$ is given by $\Omega \C$ and $\tau$ is given by the canonical twisting morphism $\iota:\C\rightarrow \Omega \C$. The $\star_\tau$-operator is the (non-linear) map of degree $-1$
\[
\star_{\tau}:\hom_{R}(C,V)\rightarrow \hom_{R}(C,V)
\]
defined as the composite
\[
\star_\tau(\psi):=C\xrightarrow{\Delta_C} \C\circ C\xrightarrow{\tau\circ \psi} \P \circ V \xrightarrow{\gamma_V} V.
\]
We further define, for $r \geq 2$, the operations 
\[
\mu_r:\hom_{R}(C,V)^{\otimes r} \rightarrow \hom_{R}(C,V)
\]
by 
\[
\mu_r(\psi_1,...,\psi_r):=\gamma_V\circ(\psi_1 \otimes...\otimes \psi_r) \circ\Delta^r_C,
\]
with $\psi_1,...,\psi_r \in \hom_{R}(C,V)$ and $\Delta^r_C:C\rightarrow \left(\C(r)\otimes C^{\otimes r}\right)_{\S_r}$ is the arity $r$ part of the coproduct of $C$. The operator $\star_\tau$ can be written as $\star_\tau(\psi)=\sum_{r \geq 2} \mu_r(\psi,...,\psi)$. 

A Maurer-Cartan element is a degree $0$ morphism  $\psi\in\hom_{R}(C,V)$ which satisfies the Maurer-Cartan equation, which is given by
\begin{equation}\label{eq:relativeMCequation}
\partial (\psi) +\star_{\tau}(\psi)=0,
\end{equation}
where $\partial$ is the differential of $\hom_{R}(C,V)$. 
The set of solutions to the Maurer-Cartan equation is denoted by $\MC\left(\hom_{R}(C,V)\right)$. From Chapter 11 of \cite{LV} we get the following proposition.

\begin{proposition}[\cite{LV}, Proposition 11.3.1]\label{prop:MCstarequation}
Under the previous assumptions we have the following bijection
\[
\MC\left(\hom_{R}(C,V)\right) \cong \hom_{\C-\mbox{coalg}}\left(C, (\C(V),Q) \right).
\]
\end{proposition}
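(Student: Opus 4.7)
The plan is to build the bijection in two stages: first show that graded $\C$-coalgebra morphisms $C \to \C(V)$ are equivalent to degree zero linear maps $\psi : C \to V$, then characterize which such $\psi$ produce chain maps, i.e.\ $\C$-coalgebra morphisms in the dg sense.

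For the first stage I would exploit the cofreeness of $\C(V)$ as a conilpotent $\C$-coalgebra (the same principle underlying Lemma \ref{lem:coderivation}): given a degree zero map $\psi : C \to V$, the corresponding graded coalgebra map is
\[
F_\psi := \sum_{r \geq 1} \bigl(\id_{\C(r)} \otimes \psi^{\otimes r}\bigr) \circ \Delta^r_C : C \longrightarrow \C(V),
\]
and conversely any graded coalgebra morphism $F$ is recovered from its projection $\psi := \pi \circ F$ onto cogenerators via this formula. Under our standing free-symmetric-group and completeness assumptions, invariants and coinvariants agree and the sum converges in the filtered sense, so this really is a bijection.

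For the second stage, compatibility with differentials is the equation $Q \circ F_\psi = F_\psi \circ d_C$. I would observe that the defect $Q \circ F_\psi - F_\psi \circ d_C$ is an $F_\psi$-coderivation into the cofree coalgebra $\C(V)$; the analogue of Lemma \ref{lem:coderivation} for such relative coderivations then says the defect vanishes if and only if its projection onto cogenerators does. The right-hand projection is $\psi \circ d_C$. The left-hand projection is $\tilde Q \circ F_\psi$, and here one uses that $\tilde Q$ decomposes as $d_V$ on the arity one component $\C(1) \otimes V = V$ together with $\gamma_V \circ (\tau \otimes \id)$ on each $\C(r) \otimes V^{\otimes r}$ for $r \geq 2$. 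Substituting the explicit formula for $F_\psi$ and re-organising yields
\[
\pi \circ Q \circ F_\psi = d_V \circ \psi + \sum_{r \geq 2} \gamma_V \circ (\tau \otimes \psi^{\otimes r}) \circ \Delta^r_C = d_V \circ \psi + \star_\tau(\psi),
\]
the arity one twisting contribution dropping out because $\tau$ is a twisting morphism and hence vanishes on the coaugmentation of $\C$. Using $\partial(\psi) = d_V \circ \psi - \psi \circ d_C$ for a degree zero map, the compatibility equation rewrites exactly as $\partial(\psi) + \star_\tau(\psi) = 0$, which is Equation \ref{eq:relativeMCequation}.

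The main obstacle I expect is the cogenerator computation of $\pi \circ Q \circ F_\psi$: one must track Koszul signs when $\psi^{\otimes r}$ is reassembled with elements of $\C(r)$, verify that the formula descends cleanly to the $\S_r$-coinvariants (here the freeness hypothesis is used), and confirm that the arity one contribution is truly absorbed into the $d_V \circ \psi$ term. Once these sign and symmetry bookkeeping issues are handled, the rest of the argument is essentially formal and transports directly from the characteristic zero proof of \cite[Proposition 11.3.1]{LV}.
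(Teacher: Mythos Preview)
The paper does not give its own proof of this proposition: it is simply quoted from \cite[Proposition~11.3.1]{LV}, and the surrounding text only sets up the $\star_\tau$-operator and invokes the result. Your proposal is precisely the standard argument from that reference---cofreeness of $\C(V)$ gives the bijection between graded $\C$-coalgebra maps and linear maps $\psi:C\to V$, and projecting the defect $Q\circ F_\psi - F_\psi\circ d_C$ onto cogenerators yields the Maurer--Cartan equation---so there is nothing to compare and your outline is correct.
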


In our specific situation, the twisting morphism is given by the canonical twisting morphism $\iota:\C \rightarrow \Omega \C$ (see \cite{LV}, Section 6.5). Because of Proposition \ref{prop:MCstarequation}, we see that have an equivalence between the set of $n$-simplices from Equation \ref{eq:MCsSet2} and the Maurer-Cartan elements in $\hom_{R}(N_*(\Delta^n),V)$.  Further notice that when $n=0$, the chains on $\Delta^0$ are isomorphic to $R$. So there is an isomorphism $\hom_{R}(N_*(\Delta^0),V)\cong V$. It is straightforward to see that under this isomorphism the Maurer-Cartan equation in $\hom_{R}(N_*(\Delta^0),V)$ given by $\partial(\psi)+\star_\iota(\psi)$ is equivalent to the Maurer-Cartan equation from Definition \ref{def:MCequation}. An equivalent formulation of the Maurer-Cartan simplicial set is then given by 
\begin{equation}
    \MC_n(V):=\MC\left(\hom_{R}\left(N_*(\Delta^n,V\right)\right).
\end{equation}
The face and degeneracy maps are the maps induced by the face and degeneracy maps of $\{\Delta^n\}_{n \geq 0}$. 

\begin{lemma}
   Under the earlier assumptions from this section, the induced face maps $d_i:\MC_n(V)\rightarrow \MC_{n-1}(V)$ and $s_j:\MC(V)\rightarrow \MC_{n+1}(V)$ preserve Maurer-Cartan elements.
\end{lemma}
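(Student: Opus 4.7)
The plan is to exploit naturality: the cosimplicial structure maps $d^i\colon \Delta^{n-1}\to \Delta^n$ and $s^j\colon \Delta^{n+1}\to \Delta^n$ are simplicial maps, so after applying normalized chains they induce chain maps $d^i_*\colon N_*(\Delta^{n-1})\to N_*(\Delta^n)$ and $s^j_*\colon N_*(\Delta^{n+1})\to N_*(\Delta^n)$ that are morphisms of $\E_\infty$-coalgebras, since the Barratt-Eccles coaction on normalized chains is functorial in simplicial maps. Composing with the fixed morphism $\varphi\colon \E_\infty\to \C$, they are also morphisms of $\C$-coalgebras. Under the bijection of Proposition \ref{prop:MCstarequation}, the face map $d_i\colon \MC_n(V)\to \MC_{n-1}(V)$ (and analogously $s_j$) is given by precomposition $\psi\mapsto \psi\circ d^i_*$, so the task reduces to showing that precomposition with any $\C$-coalgebra morphism $g\colon C'\to C$ sends Maurer-Cartan elements of $\hom_R(C,V)$ to Maurer-Cartan elements of $\hom_R(C',V)$.

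To prove this, first observe that the differential on the convolution chain complex is natural: $\partial(\psi\circ g)=\partial(\psi)\circ g$, because $g$ is a chain map. The key point is the analogous naturality of $\star_\iota$. Unfolding the definition, if $g\colon C'\to C$ commutes with the $\C$-coalgebra coproducts (so $\Delta_C\circ g=(\C\circ g)\circ \Delta_{C'}$), then
\begin{align*}
\star_\iota(\psi\circ g) &= \gamma_V\circ (\iota\circ (\psi\circ g))\circ \Delta_{C'} \\
&= \gamma_V\circ (\iota\circ \psi)\circ (\C\circ g)\circ \Delta_{C'} \\
&= \gamma_V\circ (\iota\circ \psi)\circ \Delta_C\circ g \\
&= \star_\iota(\psi)\circ g,
\end{align*}
so precomposition with $g$ commutes with both $\partial$ and $\star_\iota$. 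Therefore if $\psi$ satisfies $\partial(\psi)+\star_\iota(\psi)=0$ in $\hom_R(C,V)$, then $(\partial+\star_\iota)(\psi\circ g)=(\partial(\psi)+\star_\iota(\psi))\circ g=0$ in $\hom_R(C',V)$, giving exactly the preservation of the Maurer-Cartan condition.

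Applying this with $g=d^i_*$ or $g=s^j_*$ yields the lemma. The only step that is not completely formal is the assertion that the cosimplicial operators $d^i_*, s^j_*$ are morphisms of $\E_\infty$-coalgebras, which I would regard as the main technical input; this is however essentially the defining property of the Berger-Fresse $\E_\infty$-structure on $N_*(-)$ and is recorded in \cite{BergerFresse}. Once this is granted, the verification above is just a chase of the definition of $\star_\iota$ through naturality of the cooperadic decomposition map.
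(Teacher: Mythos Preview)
Your proof is correct and is essentially the same approach as the paper's, which simply asserts that $\MC$ is a bifunctor in the coalgebra and algebra variables and that the cosimplicial maps are $\C$-coalgebra maps. You have unpacked this sketch explicitly: the naturality computation showing $\star_\iota(\psi\circ g)=\star_\iota(\psi)\circ g$ for a $\C$-coalgebra map $g$ is exactly the bifunctoriality the paper invokes, and your identification of $d^i_*,s^j_*$ as $\E_\infty$-coalgebra (hence $\C$-coalgebra) morphisms via Berger--Fresse functoriality is the content behind ``the cosimplicial maps are coalgebra maps.''
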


This follows from the fact that $\MC$ is a bifunctor in the coalgebra and the algebra. Since the cosimplicial maps are coalgebra maps they preserve the Maurer-Cartan equation. The Maurer-Cartan simplicial set therefore becomes indeed a well defined simplicial set.

\begin{remark}
    In characteristic $0$, the convolution algebra becomes canonically an $L_\infty$-algebra with the $L_\infty$-structure defined in \cite{Wierstra1}. This is unfortunately not the case when working over rings that do not contain $\Q$ as a subring. 
\end{remark}

\begin{remark}
    In our definition of the Maurer-Cartan simplicial set we have chosen to use the chains on the standard simplex. But this construction could of course also be done for other models of the simplex. It is a natural question to ask whether other models would give    homotopy equivalent Maurer-Cartan simplicial sets. It seems highly likely that the methods of Milham and Rogers (see  \cite{MilhamRogers}) would also apply to this more general setting, but this is beyond the scope of this paper.
\end{remark}


\subsection{The Maurer-Cartan simplicial set is a Kan complex}

In this section, we prove that the Maurer-Cartan simplicial set is a Kan complex.

\begin{theorem}\label{thrm:MCisKan}
    Let $V$ be a complete $\Omega \C$-algebra then the Maurer-Cartan simplicial set $\MC_\bullet(V)$ is a Kan complex.
\end{theorem}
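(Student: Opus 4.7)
The plan is to translate the Kan filling condition into an extension problem for Maurer-Cartan elements in the convolution algebra and then solve it iteratively using the filtration on $V$. By Proposition \ref{prop:MCstarequation} combined with the cosimplicial structure on $\{N_*(\Delta^n)\}_{n\geq 0}$, a horn $\Lambda^n_k \to \MC_\bullet(V)$ is the data of a Maurer-Cartan element $\psi \in \hom_R(N_*(\Lambda^n_k), V)$ satisfying $\partial\psi + \star_\iota(\psi) = 0$, and a filler is an extension of $\psi$ to an MC element $\tilde\psi \in \hom_R(N_*(\Delta^n), V)$.

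The decisive reduction is that, as a graded $R$-module, $N_*(\Delta^n)$ splits as $N_*(\Lambda^n_k) \oplus R\cdot\iota_n \oplus R\cdot d_k\iota_n$, where $\iota_n$ is the top non-degenerate simplex and $d_k\iota_n$ is the single missing face, with $d\iota_n \equiv (-1)^k d_k\iota_n$ modulo $N_*(\Lambda^n_k)$. An extension is therefore determined by the pair $\bigl(\tilde\psi(\iota_n), \tilde\psi(d_k\iota_n)\bigr) \in V_n \oplus V_{n-1}$. I would set $\tilde\psi(\iota_n) := 0$ and solve for $\tilde\psi(d_k\iota_n)$ using the MC equation at $\iota_n$:
\[
(-1)^k \tilde\psi(d_k\iota_n) = -\sum_{i\neq k}(-1)^i \psi(d_i\iota_n) + \star_\iota(\tilde\psi)(\iota_n).
\]
Although $\tilde\psi(d_k\iota_n)$ may reappear on the right through the cooperadic coproduct $\Delta(\iota_n)$, every such occurrence is paired with at least one further $\tilde\psi$-factor and hence lands in strictly deeper filtration. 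By the completeness assumption on $V$ and the filtration-compatibility of the $\Omega\C$-structure, the equation is solved by a convergent Picard iteration.

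The remaining obligation is to check that the obstruction $m := \partial\tilde\psi + \star_\iota(\tilde\psi)$ vanishes not only on $N_*(\Lambda^n_k)$ (by hypothesis) and on $\iota_n$ (by construction) but also on $d_k\iota_n$. This I would establish via a Bianchi-type identity for the Maurer-Cartan operator: since $\iota:\C\to\Omega\C$ is a Koszul twisting morphism, the operator $\partial + \star_\iota$ satisfies a relation of the form $\partial m = L_{\tilde\psi}(m)$, where $L_{\tilde\psi}$ is built from the convolution operations and strictly raises the filtration. Evaluating this identity at $\iota_n$ and using the congruence $d\iota_n \equiv (-1)^k d_k\iota_n$ modulo $N_*(\Lambda^n_k)$ yields that $m(d_k\iota_n)$ equals an expression in arbitrarily high filtration, hence vanishes by completeness.

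The main obstacle is to formulate and verify this Bianchi identity together with its filtration bound directly from the operadic definitions, without relying on the characteristic-zero $L_\infty$-convolution structure. This amounts to operadic bookkeeping based on the twisting relation $\partial\iota + \iota \star \iota = 0$ in $\hom(\C,\Omega\C)$ and on the freeness of the symmetric group actions on $\C$ and $\E_\infty$, which together guarantee that $\star_\iota$ and $\partial$ satisfy a pre-Lie-like compatibility whose correction terms are of manifestly higher arity.
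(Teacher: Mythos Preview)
Your strategy is correct in outline but takes a genuinely different route from the paper's. The paper follows Getzler's iterative correction scheme: it extends the horn data to an initial map $\psi_1$ on all of $N_*(\Delta^n)$ (with a specific choice on the missing face and zero on the top cell), and then uses the explicit contracting homotopy $H^k_n$ onto the $k$th vertex to define corrections $\gamma_i = H^k_n(\partial\psi_i + \star_\iota\psi_i)$ and $\psi_{i+1}=\psi_i-\gamma_i$. One then checks directly, using the Leibniz rule and the twisting-morphism relation for $\iota$, that $\psi_i$ satisfies the Maurer--Cartan equation modulo $F^{i+1}$, so the limit converges to a genuine filler. No Bianchi-type identity is invoked; the contracting homotopy absorbs that role.

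Your approach instead fixes $\tilde\psi(\iota_n)=0$, solves the Maurer--Cartan equation at $\iota_n$ for $\tilde\psi(d_k\iota_n)$ by Picard iteration, and then appeals to a Bianchi identity $\partial m = L_{\tilde\psi}(m)$ to kill the obstruction on the missing face. This is a legitimate alternative, and the Picard step is fine. The point that deserves more care is precisely the one you flag: over a ring where the convolution algebra is \emph{not} $L_\infty$ (as the paper itself notes), the identity you need does not follow from any ambient homotopy-Lie structure. It can still be extracted from the fact that the lift $\Psi:C\to\C(V)$ of $\tilde\psi$ satisfies $\partial(Q\Psi-\Psi d_C)=0$ in $\hom(C,\C(V))$, but translating this into your statement about $m=\pi_V\circ(Q\Psi-\Psi d_C)$ and an operator $L_{\tilde\psi}$ that strictly raises filtration requires unpacking the coderivation property of $Q$ and the coalgebra-map property of $\Psi$; this is where the actual work lies, and ``operadic bookkeeping'' undersells it. The paper's contraction argument buys you exactly the avoidance of this step: the relation $\partial H^k_n + H^k_n\partial = \id - P^k_n$ lets one verify the improvement in filtration degree by a short direct computation at each stage, using only the Leibniz rule and the single cancellation $\partial(\mu_2)=0$ coming from $\iota$ being a twisting morphism.
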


To prove the theorem, we follow Getzler's proof of the fact that the Maurer-Cartan simplicial set or Deligne-Getzler-Hinich groupoid associated to an $L_\infty$-algebra (see \cite{Getzler}, Section 4). The main idea behind Getzler's proof is that the chains on $\Delta^n$ come with a retraction onto each of the vertices of $\Delta^n$. Given a horn $\Lambda^n_k$ in $V$, we can use this contraction to inductively build a horn filler. In \cite{deKleijnWierstra}, we generalized Getzler's proof to the case of $A_\infty$-algebras by using the cochains on the simplices. In the case of a general $\Omega \C$-algebra, we need to replace the cochains by the chains and replace the tensor product by the mapping space. The biggest difference is that in the $A_\infty$- and $L_\infty$-case the tensor product with the cochains (resp. polynomial de Rham forms) is again an $A_\infty$ (resp. $L_\infty$-algebra).  In our case, the tensor product is replaced by the convolution algebra which is not naturally an $\Omega \C$-algebra. We therefore need to work with the more general Maurer-Cartan equation from Equation \ref{eq:relativeMCequation}, which has as a consequence that our formulas are slightly different from Getzler's proof in \cite{Getzler}.

Before we prove Theorem \ref{thrm:MCisKan}, we first need a contraction on the chains of the simplex $\Delta^n$. Since all the arguments are completely analogues to the arguments in \cite{deKleijnWierstra}, we have left the proofs to the reader. For what follows we use the following notation. The $k$-dimensional subsimplex of $\Delta^n$ with vertices $i_0,...,i_k$ is denoted by $e_{i_0,...,i_k}$, with $0 \leq i_0 < ...< i_k\leq n$. A set of generators for $N_k(\Delta^n)$ as a chain complex is then given by $\{e_I\}$ where $I\subseteq \{0,...,n\}$ runs  over all subsets of order $k+1$. The differential is then given by 
\[
d(e_{i_0,...,i_k})=\sum_{j=0}^k (-1)^j e_{i_0,...,\widehat{i_j},...,i_k}, 
\]
where $\widehat{i_j}$ means that we omit the $i_j$th index.

The counit of $N_*(\Delta^n)$ is the map induced by the map of simplicial sets $\Delta^n\rightarrow \Delta^0$. Explicitly the counit $\epsilon:N_*(\Delta^n)\rightarrow R$  is defined on generators by $\epsilon(e_k)=\1$ and zero otherwise. The inclusion of the $k$th vertex is denoted by $p^k_n:\Delta^0 \rightarrow \Delta^n$ and is given by $p^k_n(e_0)=e_k$.

The composition $p_n^k\circ \epsilon:N_*(\Delta^n)\rightarrow N_*(\Delta^n)$ is homotopic to the identity $\id:N_*(\Delta^n)\rightarrow N_*(\Delta^n)$ via the chain homotopy $h^k_n:N_*(\Delta^n)\rightarrow N_{*+1}(\Delta^n)$ given by
\[
h^k_n(e_I):=(-1)^{s}e_{I\cup k},
\]
with $e_I\in N_*(\Delta^n)$ and where $I\cup k$ is defined as $0$ if $k$ was already an element of $I$. The sign $s$ is given by the number of elements in $I$ smaller than $k$.

The map $h^k_n$ is a chain homotopy between $p_n^k$ and the identity, i.e. it satisfies the following equation
\begin{equation}\label{eq:homotopy}
d h^k_n +h^k_n d = \id_{N_*(\Delta^n)}-p_n^k.    
\end{equation}

The map $p^k_n$ induces a map 
\[
\tilde{P_n^k} :\hom_{R}(N_*(\Delta^n),V)_d \rightarrow \hom_{R}(N_*(\Delta^0),V)_d\cong V
\]
given by
\[
\tilde{P_n^k} (\varphi):=\varphi \circ p^k_n
\]
with $\varphi \in\hom_{R}(N_*(\Delta^n),V)_d$. Similarly, the counit $\epsilon:N_*(\Delta^n)\rightarrow R$ induces a map
\[
E: V\cong \hom_{R}(N_*(\Delta^0),V)_d\rightarrow \hom_{R}(N_*(\Delta^n),V)_d
\]
given by
\[
E(\phi):=\phi \circ \epsilon
\]
with $\phi \in \hom_{R}(N_*(\Delta^0),V)_d$. The map 
\[
P^k_n:\hom_{R}(N_*(\Delta^n),V)_d\rightarrow \hom_{R}(N_*(\Delta^n),V)_d
\]
is defined by 
\[
P_n^k:=E \circ \tilde{P^k_n}.
\]
The homotopy $h^k_n$ induces a similar homotopy on the level of convolution algebras
\[
H^k_n:\hom_{R}(N_*(\Delta^n),V)_d\rightarrow \hom_{R}(N_*(\Delta^n),V)_{d+1}.
\]
given by 
\[
H^k_n(\varphi):=\varphi \circ h^k_n
\]
with $\varphi\in \hom_{R}(N_*(\Delta^n),V)_d$. The maps $\partial$, $E$, $P_n^k$ and $H^k_n$ satisfy the following identity
\begin{equation}\label{eq:chainhomotopy}
\partial H^k_n +H^k_n\partial = \id_{\hom_{R}(N_*(\Delta^n),V)}-P_n^k.
\end{equation}
We further define the map 
\[
R_n^k:\hom_{R}(N_*(\Delta^n),V)_d\rightarrow\hom_{R}(N_*(\Delta^n),V)_d
\]
as
\[
R^k_n:=\partial \circ H^k_n.
\]
Using these maps we can now prove Theorem \ref{thrm:MCisKan}.


\begin{proof}[Proof of Theorem \ref{thrm:MCisKan}]
    As mentioned earlier, we use a variation of Getzler's proof in which we start with a certain element in $\hom_{R}(N_*(\Delta^n),V)$ and extend this to a Maurer-Cartan element. 

    Suppose that we have a $k$-horn in $\MC_\bullet(V)$, i.e. a map $\varphi:N_*(\Lambda^n_k)\rightarrow V$, then we need to construct a horn filler, i.e. a map $\psi:N_*(\Delta^n)\rightarrow V$ that fills $\varphi$.  We define $\psi$ inductively and start with $\psi_1$ 
    which we define as $\psi_1=\xi+\rho$, where
    \[
    \xi(e_k):=\varphi(e_k)
    \]
    and zero otherwise and 
    \[
    \rho(e_I) := 
	 \begin{cases} \varphi(e_I) &\mbox{if } e_I \neq e_k,e_{0...\hat{k}...n} \mbox{ or } e_{01...n}, \\
	 	\sum_{i\neq k}\varphi(e_{0...\hat{i}...n}) &  \mbox{if } e_I=e_{0...\hat{k}...n}, \\
   0 & \mbox{if } e_I=e_k \mbox{ or } e_{01...n}.
 	\end{cases}
    \]
    Since $\varphi$ is a Maurer-Cartan element, the element $\xi$ is also a Maurer-Cartan element. It is further straightforward to see that $\partial \rho=0$, so it is a cycle.  
    

    In most cases, the element $\psi_1$ is not a Maurer-Cartan element. Since $\xi$ is a Maurer-Cartan element and $\rho$ a cycle, it does however satisfy the Maurer-Cartan equation in 
    \[
    F_1\hom(N_*(\Delta^n),V)/F_2\hom(N_*(\Delta^n),V),
    \]
    i.e.  it satisfies the Maurer-Cartan equation modulo elements of filtration degree $2$. We proceed by adding a "correction" term $\gamma_1$ that defines an element $\psi_2$ which satisfies the Maurer-Cartan equation up to elements of filtration degree $3$. First, we define  
    \[
    \gamma_1 := H^k_n(\partial \psi_1 +\star_{\iota} (\psi_1)).
    \]
    The element $\gamma_1$ is of filtration degree greater or equal $2$ because both $\star_\iota(\psi_1)$ and $\partial \psi_1$ are  of filtration degree $\geq 2$. We define $\psi_2$ as 
    \[
    \psi_2:=\psi_1-\gamma_1.
    \]
    Next we show that the element $\psi_2$ satisfies the Maurer-Cartan equation up to filtration degree $3$. We have
    \begin{equation}\label{eq:psi_2isMC}
    \partial(\psi_2)+\star_{\iota}(\psi_2)= \partial \psi_1 -\partial \gamma_1 + \star_\iota(\psi_1-\gamma_1).
    \end{equation}
    Since $\gamma_1$ is of filtration degree $\geq 2$, the element $\star_\iota(\psi_1-\gamma_1)$ can be rewritten as
    \[
    \star_\iota(\psi_1-\gamma_1)=\star_\iota(\psi_1)+\mbox{terms of filtration degree $\geq 3$}.
    \]
    So modulo elements of filtration degree $\geq 3$, Equation \ref{eq:psi_2isMC} reduces to
    \begin{equation}\label{eq:partialpsi1}
        \partial \psi_1 -\partial \gamma_1 + \star_\iota(\psi_1).
    \end{equation}
    If we apply Equation \ref{eq:chainhomotopy} to $\partial \gamma_1$, we get
    \begin{align}
    \partial \gamma_1 & =\partial H^k_n(\partial \psi_1 +\star_{\iota} \psi_1) \\
    &= \partial \psi_1 +\star_\iota(\psi_1) -P_n^k(\partial(\psi_1))-P_n^k(\star_\iota(\psi_1)) - H^k_n (\partial^2 \psi_1) - H_n^k(\partial ( \star_\iota(\psi_1))).  \end{align}
    So if we combine this with Equation \ref{eq:partialpsi1}, we get
    \begin{equation}
        -P_n^k(\partial(\psi_1))-P_n^k(\star_\iota(\psi_1)) - H^k_n (\partial^2 \psi_1) - H_n^k(\partial ( \star_\iota(\psi_1))).
    \end{equation}
    Because $\xi$ satisfies the Maurer-Cartan equation, the terms $-P_n^k(\partial(\psi_1))-P_n^k(\star_\iota(\psi_1))$ are zero. The term $- H^k_n (\partial^2 \psi_1)$ is also zero since it involves $\partial^2$. So we are left with the term $-H_n^k(\partial ( \star_\iota(\psi_1)))$ and we need to show that this is of filtration degree $\geq 3$. If we use the Leibniz rule for algebras over operads we get 
    \[
    -H_n^k(\partial ( \star_\iota(\psi_1)))=\sum_{r\geq 2} \partial(\mu_r)(\psi_1^{\otimes r}) +\sum_{r \geq 2, 0\leq l \leq r} \mu_r(\psi_1^{\otimes l} \otimes \partial(\psi_1) \otimes \psi_1^{\otimes r-l-1}). 
    \]
    Since $\partial \psi_1$ is of filtration degree $\geq 2$, all the terms of the form $\mu_r(\psi_1^{\otimes l} \otimes \partial(\psi_1) \otimes \psi_1^{\otimes r-l-1})$ are of filtration degree $\geq 3$. Further, for $r \geq 3$, the terms $\partial(\mu_r)(\psi_1^{\otimes r})$ are all of filtration degree $\geq 3$. The term $\partial(\mu_2)(\psi_1\otimes \psi_1)=0$ because $\mu_2$ is the arity two component of an operadic twisting morphism. The element $\psi_2$ is therefore Maurer-Cartan up to terms of filtration degree $3$. 

    We continue inductively by defining the next "correction" terms as
    \[
    \gamma_i:=H_n^k\left(\partial \psi_i+\star_{\iota}(\psi_i)\right)
    \]
and 
\[
\psi_{i+1}=\psi_i-\gamma_i.
\]
By exactly the same arguments it follows that $\psi_i$ is Maurer-Cartan up to elements of filtration degree $\geq i+1$. The element $\psi$ is then defined as
\[
\psi:=\lim \psi_i
\]
by completeness this limit actually converges and is a Maurer-Cartan element. It can further be shown that it satisfies the properties of a horn filler for $\varphi$ and therefore proves that $\MC_\bullet(V)$ is a Kan complex.


\end{proof}

\section{Comparison to other approaches and examples}\label{sec:examples}

Recently, in \cite{DSV} another approach to the twisting procedure was described by using the gauge group. It is currently unclear how to their twisting procedure exactly compares to ours.  In both cases, the unitary operation plays an essential role but it is not clear how the unital Hopf (co)operad condition compares to their unital extendability condition. However, the approach in this paper has multiple advantages compared to \cite{DSV}. First of all, our constructions work over arbitrary rings and not just fields of characteristic $0$ or rings that contain $\Q$. Second, the constructions of this paper also apply to differential graded operads and not just operads defined by quadratic data.

We finish this paper by showing that the Koszul dual of  every unitary operad $\P$ in simplicial sets admits a twisting procedure. By unitary in simplicial sets we mean $\P(0)=*$.  We further show that one can construct a Maurer-Cartan simlicial set for the Barratt-Eccles operad and its $\E_n$-suboperads.

\begin{theorem}
    Let $\P$  be a unitary operad in simplicial sets with finitely many non-degenerate simplices in each arity, then $N^*(\P)$ satisfies the conditions of Theorem \ref{thrm:twistingprocedure} and therefore admits a twisting procedure.
\end{theorem}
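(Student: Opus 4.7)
The plan is to verify, condition by condition, that $\Cu := N^*(\P)$ satisfies each hypothesis placed on the unital Hopf cooperad in Theorem \ref{thrm:twistingprocedure}. Concretely, I would need to establish: (i) a cooperad structure on the symmetric sequence $\{N^*(\P(r))\}_{r\geq 0}$; (ii) a unitary operation in arity zero; (iii) a strictly unital associative product in each arity that is compatible with the cooperadic decomposition (the unital Hopf axioms); and (iv) the remaining technical conditions (finite $R$-type, projectivity, conilpotency of the non-unitary quotient, and, when $\Q\nsubseteq R$, freeness as an $R[\S_r]$-module in each arity, which requires the supplementary free $\S_r$-action hypothesis recorded in the introduction).

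For (i) I would dualize the operadic composition maps. The composition $\P(n)\times\P(k_1)\times\cdots\times\P(k_n)\to\P(k_1+\cdots+k_n)$ is a map of simplicial sets, and applying the contravariant functor $N^*$ together with the (dual of the) Eilenberg--Zilber shuffle map yields
\[
N^*(\P(k_1+\cdots+k_n))\longrightarrow N^*(\P(n))\otimes N^*(\P(k_1))\otimes\cdots\otimes N^*(\P(k_n)),
\]
which constitutes the cooperadic decomposition. Coassociativity and $\S_r$-equivariance reduce to the operadic associativity and equivariance of $\P$ combined with the corresponding properties of Eilenberg--Zilber. For (ii), the hypothesis $\P(0)=*$ gives $N^*(\P(0))\cong R$ concentrated in degree zero, supplying the unitary operation $\1$.

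For (iii) the Hopf structure is given by the cup product: the diagonal $\P(r)\to\P(r)\times\P(r)$ of simplicial sets induces, through Alexander--Whitney after applying $N^*$, a strictly associative unital product $\mu_r\colon N^*(\P(r))\otimes N^*(\P(r))\to N^*(\P(r))$. The Hopf cooperad axioms amount to a diagram chase showing that $\mu_r$ commutes with the cooperadic decomposition: the diagonal of the product $\P(n)\times\prod_i\P(k_i)$ factors through the product of the diagonals of the factors, and the two ways around the resulting square agree after dualization by naturality of the Alexander--Whitney and Eilenberg--Zilber maps. For (iv), the assumption of finitely many non-degenerate simplices in each arity makes $N^*(\P(r))$ a finitely generated free (hence projective) $R$-module in each degree; conilpotency of the non-unitary quotient follows because the reduced cooperadic decomposition strictly increases the arity filtration on $\bigoplus_{r\geq 1}\C(r)$.

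The main obstacle will be the Hopf cooperad axiom in (iii): strict, rather than merely homotopy-coherent, compatibility of the cup product with the cooperadic decomposition. Both operations are constructed by dualizing a simplicial map and then applying iterated Alexander--Whitney or Eilenberg--Zilber maps in some order, so the verification reduces to showing that these two iterated compositions agree on the nose. This ultimately follows from naturality together with the elementary commutative square expressing that the diagonal of an operadic composite factors through the product of componentwise diagonals. Once this square is pushed through $N^*$ and the shuffle maps, the remaining verifications are routine, and the conclusion that $N^*(\P)$ is a unitary unital Hopf cooperad satisfying the running assumptions---hence admits the twisting procedure of Theorem \ref{thrm:twistingprocedure}---follows.
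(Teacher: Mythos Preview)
Your proposal is correct and follows essentially the same route as the paper's proof, only in greater detail: the paper simply asserts that $N^*(\P)$ is a cooperad because $\P$ is of finite type, that the Hopf structure is the cup product and is compatible with the cooperadic decomposition by the properties of the cochain functor, and then obtains the map $u\Cocom\to N^*(\P)$ by dualizing the unique map $\P\to u\Com$ to the terminal simplicial operad. Your additional care about the technical hypotheses (finite $R$-type, projectivity, conilpotency, and the free $\S_r$-action needed when $\Q\nsubseteq R$) is appropriate and is left implicit in the paper's one-paragraph argument.
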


\begin{proof}
We need to show that $N^*(\P)$ is a unital Hopf cooperad and that there is a map of cooperads $u\Cocom\rightarrow N^*(\P)$. Since $\P$ is an operad of finite type, the cochains on $\P$ are naturally a cooperad. The Hopf structure comes from the chain level cup product, it follows from the properties of the cochain functor that the cup product is unital and is compatible with the cooperad structure. Since $u\Com$ is the terminal operad in simplicial sets, every operad $\P$ comes equipped with a unique map $\P\rightarrow u\Com$. The induced map on cochains is the map    $u\Cocom\rightarrow N^*(\P)$ we need.
\end{proof}

The main example of a class of cooperads that admit a Maurer-Cartan simplicial set are the $\E_n$-subcooperads of the dual Barratt-Eccles operad. The Barratt-Eccles operad $\BE_\infty$ is an operad in simplicial sets which naturally acts on the chains and cochains of a simplicial set (see \cite{BergerFresse} for more details). It comes with a sequence of suboperads 
\begin{equation}\label{eq:sequence}
\BE_1 \hookrightarrow \BE_2 \hookrightarrow \BE_3 \hookrightarrow ... \BE_\infty,
\end{equation}
where each $\BE_n$ models the chains on the little $n$-disks operad. We denote the normalized cochains on $\BE_n$ by $\E_n:=N^*(\BE_n)$. Since the cochains are contravariant, the sequence of maps from Equation \ref{eq:sequence} induces a sequence of maps of cooperads
\[
\E_\infty\rightarrow ... \E_3 \rightarrow \E_2 \rightarrow \E_1.
\]
So every $\E_n$-cooperad has a map $\E_\infty\rightarrow \E_n$ and therefore satisfies the conditions from Section \ref{sec:MCsimplicialsets}.  The Koszul dual of $\E_n$ is given by $\Omega \E_n$ and Fresse showed in \cite{FresseEn} that $\Omega \E_n$ is weakly equivalent to $\Lambda^{-n} \E_n^{\vee}$, where $\Lambda^{-n}$ denotes the operadic desuspension of $\E_n$ (note that we exchanged $\E_n$ and $\E_n^\vee$ from \cite{FresseEn}). By the results of Lurie from \cite{Lurie2010}, it turns out that these Maurer-Cartan simplicial sets control $\E_n$ deformation problems.

\bibliographystyle{plain} 
\bibliography{bibliography}

\end{document}